\theoremstyle{thmstyleone}%
\newtheorem{theorem}{Theorem}%
\newtheorem{proposition}[theorem]{Proposition}%
\theoremstyle{thmstyletwo}%
\theoremstyle{thmstylethree}%
\newtheorem{ex}{Example}
\newtheorem{lem}{Lemma}
\def\annu#1{_{%
\vbox{\hrule height .2pt 
\kern 1pt 
\hbox{$\scriptstyle {#1}\kern 1pt$}%
}\kern-.05pt 
\vrule width .2pt 
}}
\begin{document}

\title[Constant]{Several expressions of the net single premiums under the constant force of mortality}

\author*[1]{\fnm{Andrius} \sur{Grigutis}}\email{andrius.grigutis@mif.vu.lt}
\author[1]{\fnm{Laurynas} \sur{Lukoševičius}}\email{laurynas.lukosevicius@mif.stud.vu.lt}
\author[1]{\fnm{Mindaugas}\sur{Venckevičius}}\email{mindaugas.venckevicius@mif.stud.vu.lt}

\affil*[1]{\orgdiv{Institute of Mathematics}, \orgname{Vilnius University}, \orgaddress{\street{Naugarduko 24}, \city{Vilnius}, \postcode{LT-03225}, \country{Lithuania}}}

\abstract{In this article, we present several formulas that make it easier to compute the net single premiums when the mortality force over the fractional ages is assumed to be constant (C). More precisely, we compute the moments of the random variables $\nu^{T_x}$, $T_x$, $T_x\nu^{T_x}$, etc., where $T_x$ denotes the future lifetime of a person who is $x\in\{0,\,1,\,\ldots\}$ years old, and $\nu$ is the annual discount multiplier. We verify the obtained formulas on the real data from the human mortality table and the Gompertz survival law. The obtained numbers are compared with the corresponding ones when the survival function over fractional ages is interpolated using the uniform distribution of deaths (UDD) and Balducci's (B) assumptions. We also formulate and prove the statement on the comparison of the moments of the mentioned random variables under assumptions (C), (UDD), and (B).}

\keywords{survival function, interpolation, future lifetime, constant force of mortality, net single premium}


\pacs[MSC Classification]{91G05, 62P05, 62N99}

\maketitle

\section{Introduction}\label{sec:intr}
Let $X$ be the continuous and non-negative random variable that describes a person's life-time.  In actuarial mathematics, the tail of the distribution function
\begin{align*}
s(y):=\mathbb{P}(X> y),\,y\geqslant0
\end{align*}
is called the {\bf survival function}. If there exists a derivative $s'(y)$, then the ratio
\begin{align*}
\mu(y):=-\frac{s'(y)}{s(y)},\,y\geqslant0.
\end{align*}
is called the {\bf force of mortality}. It is well known (see, for example, \cite[p. 49]{bowers1986actuarial}) that
\begin{align*}
\frac{s(x+t)}{s(x)}=\exp\left(-\int_{x}^{x+t}\mu(y)\,dy\right),
\end{align*}
where $x\geqslant0$ is a person's age and $t\geqslant0$ determines the future life time. In practice, the values of the survival function are often known for an integer age only, i.e. $s(x)$ is given for $x\in\{0,\,1,\,\ldots\}=:\mathbb{N}_0$ only. If $x\in\mathbb{N}_0$ is fixed, then the values of the survival function for fractional ages, i.e., $s(x+t)$, $0<t<1$, are computed according to certain agreements or laws. The statistical mortality tables are typically built in a way that mortality is commonly described by the conditional annual one-year death probability $q_x$ or the corresponding survival probability $p_x=1-q_x$. Since these quantities are usually available only for integer ages, an interpolation assumption is required to define survival over fractional ages. Different assumptions yield different continuous-time survival functions and forces of mortality. There are three main fundamental laws of interpolation to connect the points $s(x),\, s(x+1),\,s(x+2),\,\ldots$:

\bigskip

{\bf Uniform distribution of deaths} (UDD)
\begin{align}\label{UDD}
s(x+k+t)=(1-t)s(x+k)+ts(x+k+1),\,x,\,k\in\mathbb{N}_0,\,t\in[0,\,1].
\end{align}

{\bf Constant force of mortality} (C),
\begin{align}\label{C}\nonumber
s(x+k+t)&=\left(s(x+k)\right)^{1-t}\left(s(x+k+1)\right)^t\\
&=s(x+k)\left(p_{x+k}\right)^t
,\,x,\,k\in\mathbb{N}_0,\,t\in[0,\,1].
\end{align}

{\bf Balducci's} assumption (B)
\begin{align}\label{Balducci}
\frac{1}{s(x+k+t)}=\frac{1-t}{s(x+k)}+\frac{t}{s(x+k+1)},\,x,\,k\in\mathbb{N}_0,\,t\in[0,\,1].
\end{align}

In Figure \ref{f1_}, we connect $s(0)=1,\,s(1)=0.8,\,s(2)=0.7,\,s(3)=0.5,\,s(4)=0.2$ according to the these three interpolations.

\begin{figure}[H]
\centering
\includegraphics[scale=0.75]{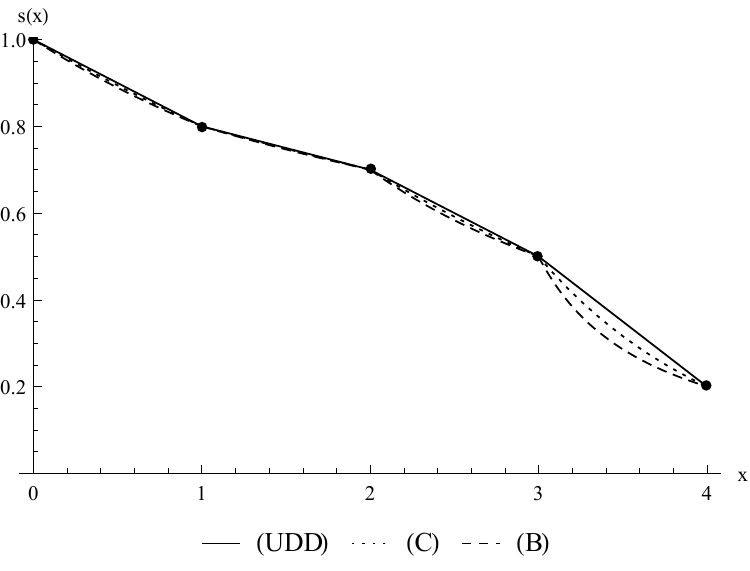}
\caption{Example of $s(x)$ interpolation according to (UDD), (B), and (C).}
\label{f1_}
\end{figure}

As seen from Figure \ref{f1_}, if $s(x+1)$ is just slightly lower than $s(x)$, the effect of interpolation is miserable. However, the influence of interpolation can become more significant when computing certain characteristics of the present value of the insured amount, particularly when the insured amount is large or the insurance period is long. See \cite{BenjaminPollard1980} as one of the earliest systematic discussions of fractional age assumptions.

Let $T_x$ be the random variable that determines the future lifetime of a person who is already $x\in\mathbb{N}_0$ years old. The future lifetime is the conditional random variable $T_x=X-x$ given that $X> x$, where $X$ is the continuous random variable that determines an entire person's lifetime. The tail distribution of $T_x$ is 
\begin{align*}
_{u}p_x:=\mathbb{P}(T_x> u)=\mathbb{P}(X> x+u|X> x)=\frac{s(x+u)}{s(x)},\,x\in\mathbb{N}_0,\,u\geqslant0,
\end{align*}
and the probability density of $T_x$ is
\begin{align}\label{cond_density}
f_x(u):=-\left( _{u}p_x\right)'_u,\,u\geqslant0,
\end{align}
if the derivative exists.

The conditional density $f_x(k+t)$ when $x\in\mathbb{N}_0$ is fixed, $k$ varies over $\{0,\,1,\,\ldots\}$, and $0<t<1$, under the interpolation assumptions (UDD), (C), and (B) respectively is:
{\bf }
\begin{align}\label{cond_dens_UDD}
&{\textbf{(UDD)}}:\quad f_x(k+t)=-(_{k+t}p_x)'_t={_k}p_x-{_{k+1}}p_x=\frac{d_{x+k}}{l_x}=:{_{k|1}}q_x,\\ \label{cond_dens_C}
&{\textbf{(C)}}:\quad f_x(k+t)=-(_{k+t}p_x)'_t
={_kp}_x\cdot(p_{x+k})^t \cdot \log (1/p_{x+k}),\\ \label{cond_dens_B}
&{\textbf{(B)}}:\quad f_x(k+t)=-(_{k+t}p_x)'_t
=\frac{_{k+1}p_x\cdot q_{x+k}}{\left(1-(1-t)\cdot q_{x+k}\right)^2},
\end{align}
where $l_x$ in \eqref{cond_dens_UDD} represents the expected number of survivors to age $x$ from the $l_0$ newborns, i.e., $l_x=l_0s(x)$, and $d_x=l_x-l_{x+1}$ denote the number of deaths between ages $x$ and $x+1$. See Figure \ref{f2_} for the illustrated densities \eqref{cond_dens_UDD}, \eqref{cond_dens_C}, \eqref{cond_dens_B} when $s(0)=1,\,s(1)=0.8,\,s(2)=0.7,\,s(3)=0.5,\,s(4)=0.2$.

\begin{figure}[H]
\centering
\includegraphics[scale=0.75]{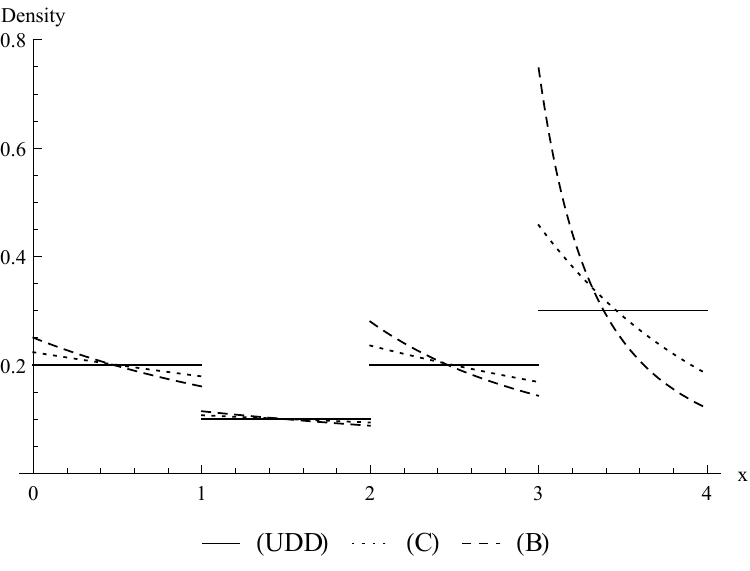}
\caption{View of the conditional density $f_x(k+t)$ under (UDD), (C), and (B) assumptions when when $s(0)=1,\,s(1)=0.8,\,s(2)=0.7,\,s(3)=0.5,\,s(4)=0.2$.}
\label{f2_}
\end{figure}

Let us shortly describe the service of mathematics in actuarial science. If \euro\, denotes the insured amount and insurance holds till the end of life, then the present value of \euro\, is given by \euro$\nu^{T_x}$, where $\nu=1/(1+i)$ denotes the discount multiplier, $i>-1$ is the annual interest rate, and $T_x$ is the future lifetime of a person who is already $x$ years old. The mathematical essence of actuarial mathematics consists of the characterization of the random variable \euro$\nu^{T_x}$ (see, for example, \cite[Ex. 20.20]{KlugmanPanjerWillmot2012}). More precisely, we may seek to compute the expected value of 
\begin{align}\label{expectation}
\mathbb{E}\nu^{T_x}=\int\limits_{0}^{\infty}\nu^uf_x(u)\,du=\int\limits_{0}^{1}\nu^uf_x(u)\,du+
\int\limits_{1}^{2}\nu^u f_x(u)\,du+\ldots
\end{align}
The computation of \eqref{expectation} or similar integrals when $\nu^{T_x}$ gets replaced by some other random function, is the easiest under the (UDD) assumption because the density $f_x$ is the step function over the intervals $(0,\,1]\,(1,\,2],\,\ldots$. This is done in various sources, see, for example, \cite{Book}. The expectations of type \eqref{expectation} under Balducci's assumption (B) are computed in \cite{Balducci}. In this paper, we derive formulas that facilitate the easier computation of the expectations of type \eqref{expectation} under the assumption of a constant force of mortality (C).

The constant force of mortality is definitely a realistic assumption in human populations. According to \eqref{C}, $_tp_{x}=(p_{x})^t$, $x\in\mathbb{N}_0$, $0\leqslant t \leqslant 1$, which shows that the corresponding one-year survival probability and the future life-time fully determine survival over fractional ages. This simplistic approximation is consistent with observed mortality patterns over short age intervals: as illustrated in Figures \ref{f1} and \ref{f2} below, the survival function under the constant force of mortality assumption is very close to that obtained from the Gompertz mortality model (\cite{gompertz1825nature}, \cite{CairnsBlakeDowd2008}) over fractional ages. In our work, the force of mortality is not the same constant throughout the entire future lifetime; it can vary over different fractional ages, see \cite[Ch. 3.3.2]{dickson2019}. However, if the force of mortality is considered as the same constant $\lambda>0$ throughout the entire future lifetime, the future lifetime is distributed exponentially with parameter $\lambda>0$, \cite{DenuitDhaeneGoovaertsKaas2005}. See also \cite[Ch. 8.10.1]{david2015}, \cite[p. 68]{bowers1986actuarial} for some actuarial characteristics when the force of mortality is considered as the same constant throughout the entire future lifetime. 

\section{Main results}\label{sec:results}

    This section consists of the main results of the paper. In Propositions \ref{prop:main}--
\ref{prop:[T]_nu} we compute the $m$-th moments of the random variables $\nu^{T_x}$, $T_x$, $T_x\cdot \nu^{T_x}$, $[T_x+1]\cdot\nu^{T_x}$ respectively. We then formulate two lemmas: one comparing the expectations of the mentioned random variables, and another on the probability that the future lifetime $T_x$ falls into the shorter-than-one-year interval. In our last statements, in Propositions \ref{prop:j_times} and \ref{prop:j_[T]_nu_x}, we divide each year in $j\in\mathbb{N}$ equal pieces and compute the $m$-th moments of the random variables $\nu^{\left([T_x j]+1\right)/j},\, [j T_x+1]\cdot\nu^{T_x}$. All of the formulated statements are proved in Section \ref{sec:proofs}. When necessary, in the remarks below the formulated statements, we examine cases where uncertainty may arise in the provided formulas. That typically consists of cases $p_{x+k}\to0$, $p_{x+k}\to1$, or $\nu^m \cdot p_{x+k}\to1$. 

We anticipate that our considered expectations cover the majority of insurance products or can be modified in desired ways to express the net actuarial values for other insurance products; see the end of this section for a more precise example.

Let us refer to \cite{Balducci} or \cite{notations} for international actuarial notations used in the further text. We start with the statement on the $m$-th moment of the random variable $\nu^{T_x}$.  

\bigskip

\begin{proposition}\label{prop:main}
Say that the survival function $s(x)$, $x\in\mathbb{N}_0$ is interpolated according to the constant force of mortality assumption \eqref{C} and let $T_x$ denote the future lifetime of a person being of $x\in\mathbb{N}_0$ years old. If $p_{x+k}>0$ for all $l\leqslant k \leqslant l+n-1$, where $l\in\mathbb{N}_0$ and $n\in\mathbb{N}$, then
\begin{align}
^m_{l|}\bar{A}_{x:\actuarialangle{n}}^1&:=\mathbb{E}\nu^{mT_x}\mathbbm{1}_{\{l\leqslant T_x < l+n\}}\nonumber\\ 
&=\sum_{k=l}^{n+l-1}\frac{\nu^{mk}\cdot {_kp}_x\cdot(1-\nu^m\cdot p_{x+k})\cdot \log p_{x+k}}{\log (\nu^m \cdot p_{x+k})},\label{for_n_years_l_delay}
\\
^m_{l|}\bar{A}_{x}&:=\lim_{n\to\infty}\, ^m_{l|}\bar{A}_{x:\actuarialangle{n}}^1,
\label{till_the_end_of_life_l}
\end{align}
where $m\in\mathbb{N}$.
\end{proposition}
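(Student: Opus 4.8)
The plan is to reduce the expectation to a finite sum of elementary one-variable integrals over unit intervals and to evaluate each of them in closed form using the constant-force density \eqref{cond_dens_C}. Since the event $\{l\leqslant T_x<l+n\}$ is the disjoint union of $\{k\leqslant T_x<k+1\}$, $k=l,\ldots,l+n-1$, I would first write
\begin{align*}
\mathbb{E}\nu^{mT_x}\mathbbm{1}_{\{l\leqslant T_x<l+n\}}
=\sum_{k=l}^{l+n-1}\int_k^{k+1}\nu^{mu}f_x(u)\,du
=\sum_{k=l}^{l+n-1}\int_0^1\nu^{m(k+t)}f_x(k+t)\,dt.
\end{align*}

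Substituting $f_x(k+t)={}_{k}p_x\,(p_{x+k})^t\log(1/p_{x+k})$ from \eqref{cond_dens_C} and pulling out the $t$-independent factors, the $k$-th summand becomes $\nu^{mk}\,{}_{k}p_x\,\log(1/p_{x+k})\int_0^1(\nu^m p_{x+k})^t\,dt$. The remaining integral is elementary: $\int_0^1 a^t\,dt=(a-1)/\log a$ for $a>0$, $a\ne 1$. Taking $a=\nu^m p_{x+k}$ — admissible whenever $0<p_{x+k}<1$ and $\nu^m p_{x+k}\ne 1$ — and using $\log(1/p_{x+k})=-\log p_{x+k}$, the $k$-th summand simplifies to
\begin{align*}
\frac{\nu^{mk}\,{}_{k}p_x\,(1-\nu^m p_{x+k})\log p_{x+k}}{\log(\nu^m p_{x+k})},
\end{align*}
which is precisely the $k$-th term in \eqref{for_n_years_l_delay}; summing over $k$ gives that identity. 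Formula \eqref{till_the_end_of_life_l} then follows by letting $n\to\infty$: the terms of the series are non-negative, so the partial sums $^m_{l|}\bar{A}_{x:\actuarialangle{n}}^1$ increase to their limit (monotone convergence), and that limit is finite since $\nu^{mT_x}\mathbbm{1}_{\{T_x\geqslant l\}}\leqslant 1$.

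The steps above are routine; the points I expect to need the most care are the degenerate configurations excluded when we set $a=\nu^m p_{x+k}$. If $p_{x+k}=1$, the $k$-th (death-free) year should contribute $\nu^{mk}\,{}_{k}p_x$; if $\nu^m p_{x+k}=1$, then $\int_0^1 a^t\,dt=1$ and the contribution is $\nu^{mk}\,{}_{k}p_x\log(1/p_{x+k})$. I would handle both by observing that the $k$-th summand in \eqref{for_n_years_l_delay}, viewed as a function of $p_{x+k}\in(0,1]$ (respectively of the product $\nu^m p_{x+k}$), extends continuously to these boundary values and equals exactly the stated contributions there, so the formula remains valid in the limiting sense — this is the content of the remark following the proposition.
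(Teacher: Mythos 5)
Your proposal is correct and follows essentially the same route as the paper's proof: split the expectation over unit intervals, insert the constant-force density \eqref{cond_dens_C}, and evaluate the elementary integral $\int_0^1 (\nu^m p_{x+k})^t\,dt$ in closed form. Your additional treatment of the degenerate cases $p_{x+k}=1$ and $\nu^m p_{x+k}=1$ and of the limit \eqref{till_the_end_of_life_l} matches what the paper relegates to Remarks 1 and 2.
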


\bigskip

{\sc Remark 1:} {\it In formula \eqref{till_the_end_of_life_l} $n$ runs up to such large as long as $s(x+n+l-1)>0$. The same is understood in further expected values.
} 

{\sc Remark 2:} {\it If $p_{x+k}\to 0$ in \eqref{for_n_years_l_delay}, then the corresponding summands are $\nu^{mk}\cdot{_k}p_x$. If $\nu^m \cdot p_{x+k}\to 1$ in \eqref{for_n_years_l_delay}, then the corresponding summands are $\nu^{mk}\cdot {_k}p_x\cdot \log (1/p_{x+k})$.
} 

\bigskip

Let us define the upper incomplete gamma function
\begin{align}\label{gamma}
\Gamma(a,\,x)=\int_{x}^{\infty}t^{a-1}e^{-t}\,dt,\,x\geqslant0,\,a>0.
\end{align}

\begin{proposition}\label{prop:T_x}
Say that the survival function $s(x)$, $x\in\mathbb{N}_0$ is interpolated according to the constant force of mortality assumption \eqref{C} and let $T_x$ denote the future lifetime of a person being of $x\in\mathbb{N}_0$ years old. If $0<p_{x+k}<1$ for all $l\leqslant k \leqslant l+n-1$, where $l\in\mathbb{N}_0$, $n\in\mathbb{N}$, then
\begin{align}\label{T_x_m}
\mathbb{E}\left(T_x\right)^m\mathbbm{1}_{\{l\leqslant T_x< n+l\}}
=\sum_{k=l}^{n+l-1}\frac{{_k}p_x\cdot\Gamma_{m,\,k}}{(p_{x+k})^k(\log 1/p_{x+k})^m},
\end{align}
where $m\in\mathbb{N}$,
\begin{align*}
\Gamma_{m,\,k}=\Gamma\left(m+1,\,k\log1/p_{x+k}\right)-\Gamma\left(m+1,\,(k+1)\log1/p_{x+k}\right),
\end{align*}
and $\Gamma(a,\,x)$ is the incomplete upper gamma function \eqref{gamma}.

In particular, if $m=0$, $m=1$, or $m=2$, then
\begin{align}
\mathbb{E}\mathbbm{1}_{\{l\leqslant T_x< n+l\}}&=\sum_{k=l}^{l+n-1} {_k}p_x\cdot q_{x+k}=\frac{1}{l_x}\sum_{k=l}^{l+n-1}d_{x+k}=\frac{l_{x+l}-l_{x+l+n}}{l_x},\label{0}\\
{{_{l|}}\mathop{e}\limits^\circ}_{x:\actuarialangle{n}}:=\mathbb{E}T_x\mathbbm{1}_{\{l\leqslant T_x< n+l\}}&=\sum_{k=l}^{n+l-1}{_k}p_x\cdot\frac{q_{x+k}+(kq_{x+k}-p_{x+k})\log 1/p_{x+k}}{\log 1/p_{x+k}},\label{1}
\end{align}
\begin{align}\label{2}
&\mathbb{E}T_x^2\mathbbm{1}_{\{l\leqslant T_x< n+l\}}\\ \nonumber
&=\sum_{k=l}^{n+l-1}{_k}p_x\cdot
\frac{2q_{x+k}-2(q_{x+k}\cdot k-p_{x+k})\log p_{x+k}+(k^2-(1+k)^2\cdot p_{x+k})(\log p_{x+k})^2}{(\log p_{x+k})^2}
\end{align}
\end{proposition}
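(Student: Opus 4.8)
The plan is to compute $\mathbb{E}(T_x)^m\mathbbm{1}_{\{l\leqslant T_x<n+l\}}$ by splitting the expectation over the unit intervals $[k,\,k+1)$ for $k=l,\,\ldots,\,l+n-1$ and integrating the conditional density \eqref{cond_dens_C} against $(k+t)^m$. Concretely, on each such interval we write $u=k+t$ with $t\in[0,\,1)$, so that $f_x(k+t)={_k}p_x\cdot(p_{x+k})^t\log(1/p_{x+k})$, and therefore
\begin{align*}
\mathbb{E}(T_x)^m\mathbbm{1}_{\{l\leqslant T_x<n+l\}}=\sum_{k=l}^{l+n-1}{_k}p_x\cdot\log(1/p_{x+k})\int_0^1 (k+t)^m (p_{x+k})^t\,dt.
\end{align*}
The inner integral is the one piece of real work. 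I would substitute $v=(k+t)\log(1/p_{x+k})$, which turns $(p_{x+k})^t=e^{-t\log(1/p_{x+k})}$ into $e^{k\log(1/p_{x+k})}e^{-v}$ and $(k+t)^m$ into $v^m/(\log(1/p_{x+k}))^m$, with $dt=dv/\log(1/p_{x+k})$. The limits $t=0$ and $t=1$ become $v=k\log(1/p_{x+k})$ and $v=(k+1)\log(1/p_{x+k})$, so the integral collapses to a difference of two integrals of the form $\int_a^\infty v^m e^{-v}\,dv=\Gamma(m+1,\,a)$, namely $\Gamma(m+1,\,k\log(1/p_{x+k}))-\Gamma(m+1,\,(k+1)\log(1/p_{x+k}))=\Gamma_{m,\,k}$. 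Collecting the prefactors $(p_{x+k})^k=e^{-k\log(1/p_{x+k})}$ appearing from $e^{k\log(1/p_{x+k})}$ in the numerator (it sits in the denominator after I move it) and the $(\log(1/p_{x+k}))^m$ from the Jacobian and the $(k+t)^m$ term, together with the $\log(1/p_{x+k})$ already out front, I get exactly the claimed summand ${_k}p_x\cdot\Gamma_{m,\,k}/\left((p_{x+k})^k(\log(1/p_{x+k}))^m\right)$. This proves \eqref{T_x_m}; the hypothesis $0<p_{x+k}<1$ is exactly what is needed for $\log(1/p_{x+k})$ to be a nonzero finite number so that the substitution is legitimate.

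For the special cases I would not re-derive them from the gamma formula but instead integrate $\int_0^1 (k+t)^m (p_{x+k})^t\,dt$ directly for $m=0,\,1,\,2$, since for small $m$ elementary integration by parts is cleaner than unwinding incomplete gamma functions. For $m=0$ the inner integral is $((p_{x+k})^t)'$ antiderivative business, giving $\int_0^1(p_{x+k})^t\,dt=(p_{x+k}-1)/\log p_{x+k}=q_{x+k}/\log(1/p_{x+k})$, so the summand becomes ${_k}p_x\cdot q_{x+k}$, and then telescoping ${_k}p_x\cdot q_{x+k}={_k}p_x-{_{k+1}}p_x$ and multiplying through by $l_x$ gives the $d_{x+k}$ and $l_{x+l}-l_{x+l+n}$ forms in \eqref{0}. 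For $m=1$ and $m=2$, one integration by parts (resp. two) reduces $\int_0^1(k+t)^m(p_{x+k})^t\,dt$ to combinations of $\int_0^1(p_{x+k})^t\,dt$ and boundary terms; substituting back, multiplying by ${_k}p_x\log(1/p_{x+k})$, and simplifying with $p_{x+k}=1-q_{x+k}$ yields \eqref{1} and \eqref{2}. I would double-check the signs carefully, since $\log p_{x+k}<0$ and the statement mixes $\log p_{x+k}$ and $\log(1/p_{x+k})=-\log p_{x+k}$ in the numerators and denominators of \eqref{1} and \eqref{2}.

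The main obstacle, such as it is, is purely bookkeeping: keeping track of the many factors $(p_{x+k})^k$, $(\log p_{x+k})^m$, and the sign of $\log p_{x+k}$ through the substitution and through the integration-by-parts in the special cases, so that the final expressions match \eqref{T_x_m}--\eqref{2} verbatim. There is no conceptual difficulty — the interchange of the (finite) sum and integral is trivial, and convergence questions for the $n\to\infty$ limit are deferred to Remark 1. One should also note in passing that the summand in \eqref{T_x_m} is well-defined precisely because $\Gamma(m+1,\,a)$ is finite for all $a\geqslant0$ and $k\log(1/p_{x+k})\geqslant0$ under the hypothesis, so no additional care is needed at the endpoints $t=0,\,1$.
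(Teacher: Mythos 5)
Your proposal is correct and follows essentially the same route as the paper: decompose over unit intervals, pull out the constant-force density ${_k}p_x\,(p_{x+k})^{t}\log(1/p_{x+k})$, and reduce the inner integral to a difference of upper incomplete gamma values (the paper evaluates $\int_k^{k+1}t^m(p_{x+k})^t\,dt$ directly, which is your integral after the shift $t\mapsto k+t$, and likewise handles $m=1,2$ by explicit elementary antiderivatives rather than your integration by parts — a cosmetic difference only). All of your intermediate identities, including the substitution $v=(k+t)\log(1/p_{x+k})$ producing the factor $(p_{x+k})^{-k}$ and the $m=0$ telescoping, check out.
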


{\sc Remark 3:} {\it If $p_{x+k}\to0$ in \eqref{T_x_m}, then the corresponding summands are $k^m\cdot {_k}p_x$. If $p_{x+k}\to1$, then the corresponding summands are zeros.
} 

\bigskip

    In the following Proposition, we compute the $m$-th moment of the random variable $T_x\cdot \nu^{T_x}$, which describes the present value of uniformly increasing insurance payoff. Again, the general expression is complicated, and we explicitly write down only the first two moments. 

\bigskip

\begin{proposition}\label{increasing}
Say that the survival function $s(x)$, $x\in\mathbb{N}_0$ is interpolated according to the constant mortality force assumption \eqref{C} and let $T_x$ denote the future lifetime of a person who is $x\in\mathbb{N}_0$ years old. If $0<p_{x+k}<1$ for all $l\leqslant k \leqslant l+n-1$, where $l\in\mathbb{N}_0$, $n\in\mathbb{N}$, then
\begin{align}
&_{l|}^m\left(\bar{I}\bar{A}\right)_{x:\actuarialangle{n}}^1:=\mathbb{E}\,(T_x\nu^{T_x})^m\mathbbm{1}_{\{l\leqslant T_x< n+l\}}=\sum_{k=l}^{n+l-1}\frac{{_k}p_x\cdot\log \frac{1}{p_{x+k}}\cdot\tilde{\Gamma}_{m,\,k}}{(p_{x+k})^k\cdot\left(\log \frac{1}{\nu^m\cdot p_{x+k}}\right)^{m+1}},\label{increasing_gen}
\end{align}
where $m\in\mathbb{N}$,
\begin{align*}
\tilde{\Gamma}_{m,\,k}=\Gamma\left(m+1,\,k\log\frac{1}{\nu^m\cdot p_{x+k}}\right)-\Gamma\left(m+1,\,(k+1)\log\frac{1}{\nu^m \cdot p_{x+k}}\right),
\end{align*}
and $\Gamma(a,\,x)$ is the incomplete upper gamma function \eqref{gamma}.

In particular, if $m=1$, then
\begin{align}\label{increasing_1}\nonumber
&_{l|}\left(\bar{I}\bar{A}\right)_{x:\actuarialangle{n}}^1
=\mathbb{E}\,T_x\nu^{T_x}\mathbbm{1}_{\{l\leqslant T_x< n+l\}}=\\
&\sum_{k=l}^{n+l-1}{_k}p_x\cdot\log \frac{1}{p_{x+k}}\cdot\frac{\nu^k\left(1-\nu\cdot p_{x+k}+(-k+(k+1)\cdot\nu\cdot p_{x+k})\log(\nu\cdot p_{x+k})\right)}{\left(\log(\nu p_{x+k})\right)^2}.
\end{align}

Moreover, if $m=2$, then
\begin{align}\nonumber
&^2_{l|}\left(\bar{I}\bar{A}\right)_{x:\actuarialangle{n}}^1=\mathbb{E}\left(T_x\nu^{T_x}\right)^2\mathbbm{1}_{\{l\leqslant T_x< n+l\}}\\ \nonumber
&=\sum_{k=l}^{n+l-1}{_k}p_x\cdot\log\frac{1}{p_{x+k}}\cdot\nu^{2k}\Bigg(
\frac{-2 + 2p_{x+k}\nu^2 }{(\log(\nu^2p_{x+k}))^3}+
\\
&
\frac{\log(\nu^2p_{x+k})\left((2k - 2p_{x+k}(1 + k)\nu^2 + 
(-k^2 + p_{x+k}(1 + k)^2\nu^2)\log(\nu^2p_{x+k})\right)}{(\log(\nu^2p_{x+k}))^3}
\Bigg).
\label{increasing_2}
\end{align}
\end{proposition}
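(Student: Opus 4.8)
The plan is to reduce the claim to a direct integration, exploiting the same mechanism used in Propositions \ref{prop:main} and \ref{prop:T_x}. First I would write the expectation as a sum over integer years,
\[
\mathbb{E}\left(T_x\nu^{T_x}\right)^m\mathbbm{1}_{\{l\leqslant T_x< n+l\}}=\sum_{k=l}^{n+l-1}\int_{k}^{k+1}(u\nu^{u})^m f_x(u)\,du,
\]
and substitute the constant-force density \eqref{cond_dens_C}, namely $f_x(u)={_k}p_x\cdot(p_{x+k})^{u-k}\log(1/p_{x+k})$ for $u\in(k,k+1)$. Pulling out the $k$-dependent constants, each summand becomes ${_k}p_x\cdot\log(1/p_{x+k})\cdot(p_{x+k})^{-k}\int_k^{k+1}u^m\big(\nu^m p_{x+k}\big)^{u}\,du$ after combining $(p_{x+k})^{u-k}$ with $\nu^{mu}$ and writing $(u\nu^u)^m=u^m\nu^{mu}$.

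Next I would evaluate the inner integral $\int_k^{k+1}u^m c^{u}\,du$ with $c:=\nu^m p_{x+k}\in(0,1)$ by the change of variable $t=-u\log c = u\log(1/c)$, which turns it into $\big(\log(1/c)\big)^{-(m+1)}\int_{k\log(1/c)}^{(k+1)\log(1/c)} t^m e^{-t}\,dt$. That last integral is exactly $\Gamma(m+1,\,k\log(1/c))-\Gamma(m+1,\,(k+1)\log(1/c))$ by the definition \eqref{gamma} of the upper incomplete gamma function, i.e. $\tilde{\Gamma}_{m,k}$ with $c=\nu^m p_{x+k}$. Assembling the constants $(p_{x+k})^{-k}$ and $\big(\log(1/(\nu^m p_{x+k}))\big)^{-(m+1)}$ then yields the general formula \eqref{increasing_gen}.

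For the $m=1$ and $m=2$ specializations I would avoid re-deriving via incomplete gamma and instead integrate $\int_k^{k+1}u\,c^u\,du$ and $\int_k^{k+1}u^2 c^u\,du$ directly by parts (or differentiate $\int c^u\,du$ with respect to a parameter), using $\int_k^{k+1}c^u\,du=(c^{k+1}-c^{k})/\log c = c^{k}(c-1)/\log c$. This produces the closed forms with $(\log(\nu p_{x+k}))^{-2}$ and $(\log(\nu^2 p_{x+k}))^{-3}$ appearing in \eqref{increasing_1} and \eqref{increasing_2}; the bookkeeping of the polynomial-in-$k$ coefficients is the only laborious part and should be cross-checked against the general formula by expanding $\tilde\Gamma_{1,k}$ and $\tilde\Gamma_{2,k}$ via the recursion $\Gamma(a+1,z)=a\Gamma(a,z)+z^{a}e^{-z}$ down to $\Gamma(1,z)=e^{-z}$.

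The hypotheses are what make each step legitimate: $0<p_{x+k}<1$ guarantees $\log(1/p_{x+k})\neq 0$ so the density \eqref{cond_dens_C} is genuinely the stated exponential-type function and the substitution $t=u\log(1/c)$ is non-degenerate, while $\nu>0$ (from $i>-1$) ensures $c=\nu^m p_{x+k}>0$ so that $\log c$ and $(\log(1/c))^{m+1}$ are well defined. The main obstacle is not conceptual but computational: carrying the $u^m$ polynomial factor through the integration by parts for $m=2$ and matching the resulting coefficients of $1$, $k$, $k^2$ (and the $p_{x+k}$-weighted analogues) exactly to the published expression \eqref{increasing_2}; a sign error in $\log(\nu^2 p_{x+k})$ versus $\log(1/(\nu^2 p_{x+k}))$ is the easiest place to slip, so I would fix the orientation once at the substitution step and propagate it consistently.
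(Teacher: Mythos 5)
Your proposal is correct and follows essentially the same route as the paper: decompose the expectation over integer years, insert the constant-force density \eqref{cond_dens_C}, merge $\nu^{mu}(p_{x+k})^{u-k}$ into $(p_{x+k})^{-k}(\nu^m p_{x+k})^u$, and evaluate $\int_k^{k+1}u^m(\nu^m p_{x+k})^u\,du$ via the substitution that produces the difference of upper incomplete gamma functions, with the $m=1,2$ cases obtained from explicit antiderivatives. The only caveat is your parenthetical claim that $c=\nu^m p_{x+k}\in(0,1)$, which need not hold when $i<0$ (only $c>0$ is guaranteed), but this does not affect the validity of the final formulas.
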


{\sc Remark 4: }{\it If $p_{x+k}\to0$ in \eqref{increasing_gen}, then the corresponding summands are $\nu^{mk}\cdot k^m\cdot{_k}p_x$. If $p_{x+k}\to1$ in \eqref{increasing_gen}, then the corresponding summands are zeros. If $\nu^m\cdot p_{x+k}\to1$, then
\begin{align*}
\frac{\tilde{\Gamma}_{m,\,k}}{\left(\log \frac{1}{\nu^m\cdot p_{x+k}}\right)^{m+1}}\to
\frac{(k+1)^{m+1}-k^{m+1}}{m+1}.
\end{align*}
}

\bigskip

In the following proposition, we compute the $m$-th moment of the random variable $[T_x+1]\cdot\nu^{T_x}$, which describes the present value of the yearly increasing payoff of the insured amount.

\bigskip

\begin{proposition}\label{prop:[T]_nu}
Say that the survival function $s(x)$, $x\in\mathbb{N}_0$ is interpolated according to the constant mortality force assumption \eqref{C} and let $T_x$ denote the future lifetime of a person being of $x\in\mathbb{N}_0$ years old. If $p_{x+k}>0$ for all $l\leqslant k \leqslant l+n-1$, where $l\in\mathbb{N}_0$ and $n\in\mathbb{N}$, then
\begin{align}\label{eq:in_prop_4}\nonumber
^m_{l|}\left(I\bar{A}\right)_{x:\actuarialangle{n}}^1:&=
\mathbb{E}\,([T_x+1]\nu^{T_x})^m\mathbbm{1}_{\{l\leqslant T_x< n+l\}}\\
&=\sum_{k=l}^{l+n-1}\nu^{mk}\cdot(k+1)^m\cdot{_k}p_x\cdot\log\frac{1}{p_{x+k}}\cdot\frac{p_{x+k}\cdot\nu^m-1}{\log(\nu^m\cdot p_{x+k})}.
\end{align}
\end{proposition}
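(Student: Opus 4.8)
The plan is to compute the expectation directly from the definition, splitting the integral into unit intervals and exploiting that on each interval the floor function $[T_x+1]$ is constant. Write
\begin{align*}
\mathbb{E}\,([T_x+1]\nu^{T_x})^m\mathbbm{1}_{\{l\leqslant T_x< n+l\}}
=\sum_{k=l}^{l+n-1}\int_{k}^{k+1}(k+1)^m\nu^{mu}f_x(u)\,du,
\end{align*}
since for $u\in[k,\,k+1)$ we have $[u+1]=k+1$. The factor $(k+1)^m$ is constant on the $k$-th interval and can be pulled out of the integral, which is the reason this proposition is much simpler than Proposition \ref{increasing}: there is no polynomial-times-exponential integrand and hence no incomplete gamma function.

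Next I would substitute the density \eqref{cond_dens_C}, namely $f_x(k+t)= {_k}p_x\cdot(p_{x+k})^t\cdot\log(1/p_{x+k})$ for $0<t<1$, and change variables $u=k+t$. This turns the $k$-th summand into
\begin{align*}
(k+1)^m\cdot{_k}p_x\cdot\log\frac{1}{p_{x+k}}\cdot\nu^{mk}\int_{0}^{1}(\nu^m\,p_{x+k})^{t}\,dt,
\end{align*}
where I have used $\nu^{m(k+t)}(p_{x+k})^t=\nu^{mk}(\nu^m p_{x+k})^t$. The remaining integral is elementary: $\int_0^1 a^t\,dt=(a-1)/\log a$ with $a=\nu^m p_{x+k}$, giving the factor $(\nu^m p_{x+k}-1)/\log(\nu^m p_{x+k})$. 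Assembling the pieces reproduces \eqref{eq:in_prop_4} exactly.

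The only genuine subtlety — and the step I would flag as the main obstacle — is handling degenerate values of $p_{x+k}$ so that the statement makes sense under the hypothesis $p_{x+k}>0$ (rather than $0<p_{x+k}<1$). When $p_{x+k}=1$ the density on that interval is identically zero, so the summand should simply be $0$; one checks this is the limit of the displayed expression as $p_{x+k}\to1$ (both $\log(1/p_{x+k})$ and $\nu^m p_{x+k}-1$ tend to $0$, and the product with $1/\log(\nu^m p_{x+k})$ is $O(\log(1/p_{x+k}))\to0$, consistent with Remark 4's neighbouring cases). Likewise when $\nu^m p_{x+k}=1$ the ratio $(\nu^m p_{x+k}-1)/\log(\nu^m p_{x+k})\to1$ by L'Hôpital, and when $p_{x+k}\to0$ the surviving probability ${_k}p_x$ already carries a factor $p_{x+k}$, so the whole summand vanishes; these limiting identifications should be stated as the natural continuous extensions, mirroring the remarks attached to the earlier propositions. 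Finally, the $n\to\infty$ case needs only the observation that the partial sums are monotone and bounded (each summand is nonnegative and $\sum_k {_k}p_x\,q_{x+k}\leqslant 1$ after discounting), so no extra argument beyond the finite-$n$ identity is required.
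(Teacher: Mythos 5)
Your proposal is correct and follows essentially the same route as the paper: split the expectation over unit intervals where $[T_x+1]=k+1$ is constant, insert the density \eqref{cond_dens_C}, and evaluate the elementary integral $\int_k^{k+1}\nu^{mt}(p_{x+k})^t\,dt$ in closed form. The additional discussion of the degenerate cases $p_{x+k}\to1$ and $\nu^m p_{x+k}\to1$ is a sensible complement but not part of the paper's argument.
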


{\sc Remark 5: }{\it If $p_{x+k}\to0$ in \eqref{eq:in_prop_4}, then the corresponding summands are $\nu^{mk}\cdot(k+1)^m\cdot {_k}p_x$. If $\nu^m\cdot p_{x+k}\to1$, then these summands are $\nu^{mk}\cdot(k+1)^m\cdot {_k}p_x\cdot \log(1/p_{x+k})$.
}

\bigskip
In the next statement, Lemma \ref{lem:no_greater}, we give a precise comparison for the values of the previously considered expectations when they are computed under (UDD), (C), and (B) interpolations.
\bigskip

\begin{lem}\label{lem:no_greater}
Let $g(t)$ be the real, differentiable, and non-increasing function over the intervals $t\in[0,\,1)\cup [1,\,2),\, \ldots$ Then for the expected value of $g(T_x)$ holds
\begin{align}\label{exp_ineq_1}
\mathbb{E}_{\textup{(UDD)}}g(T_x)\leqslant\mathbb{E}_{\textup{(C)}}g(T_x)\leqslant\mathbb{E}_{\textup{(B)}}g(T_x).
\end{align}
Conversely, suppose that the function $g(t)$ is non-decreasing under the same conditions. In that case, 
\begin{align}\label{exp_ineq_2}
\mathbb{E}_{\textup{(UDD)}}g(T_x)\geqslant\mathbb{E}_{\textup{(C)}}g(T_x)\geqslant\mathbb{E}_{\textup{(B)}}g(T_x).
\end{align}
\end{lem}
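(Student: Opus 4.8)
The plan is to reduce the global comparison to a pointwise comparison of the three survival functions on each unit interval. Fix $x\in\mathbb{N}_0$ and a unit interval $[k,k+1)$. Writing $\mathbb{E}g(T_x)=\sum_{k\geqslant 0}\int_k^{k+1}g(u)f_x(u)\,du$, I would integrate by parts on each $[k,k+1)$ using $f_x(u)=-({}_up_x)'_u$. Concretely,
\[
\int_k^{k+1}g(u)f_x(u)\,du=\Bigl[-g(u)\,{}_up_x\Bigr]_k^{k+1}+\int_k^{k+1}g'(u)\,{}_up_x\,du,
\]
and after telescoping the boundary terms across $k$ one is left with a sum of integrals of the form $\int_k^{k+1}g'(u)\,{}_up_x\,du$ plus terms depending only on $g$ and on the integer-age values ${}_kp_x$, which are \emph{identical} under (UDD), (C), and (B) since all three interpolations agree at integer ages. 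Hence the entire difference between, say, $\mathbb{E}_{\textup{(C)}}g(T_x)$ and $\mathbb{E}_{\textup{(UDD)}}g(T_x)$ collapses to $\sum_k\int_k^{k+1}g'(u)\bigl({}_up_x^{\textup{(C)}}-{}_up_x^{\textup{(UDD)}}\bigr)\,du$, and similarly for the (B) versus (C) comparison.

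The next step is the key pointwise inequality: for each integer $k$ and each $t\in[0,1]$,
\[
{}_{k+t}p_x^{\textup{(UDD)}}\ \leqslant\ {}_{k+t}p_x^{\textup{(C)}}\ \leqslant\ {}_{k+t}p_x^{\textup{(B)}}.
\]
Since ${}_{k+t}p_x={}_kp_x\cdot{}_tp_{x+k}$ and ${}_kp_x$ is common, it suffices to prove $1-tq_{x+k}\leqslant (p_{x+k})^t\leqslant p_{x+k}/(1-(1-t)q_{x+k})$ for $p_{x+k}\in(0,1)$, $t\in[0,1]$, which follows from convexity of $p\mapsto p^t$ (giving the lower bound against the chord, i.e.\ the (UDD) line) and from the weighted AM--GM / Jensen inequality applied to $1/s$ being a convex combination in the (B) case (equivalently, comparing the geometric interpolation of $s$ with the harmonic interpolation). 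I would state and prove this as a short standalone computation, noting equality only at $t\in\{0,1\}$ or $p_{x+k}\in\{0,1\}$.

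Finally I would assemble the two pieces. If $g$ is non-increasing on each $[k,k+1)$ then $g'\leqslant 0$ there, so integrating the pointwise inequality ${}_up_x^{\textup{(UDD)}}\leqslant{}_up_x^{\textup{(C)}}\leqslant{}_up_x^{\textup{(B)}}$ against the nonpositive weight $g'(u)$ \emph{reverses} it, yielding $\int_k^{k+1}g'\cdot{}_up_x^{\textup{(UDD)}}\,du\geqslant\int_k^{k+1}g'\cdot{}_up_x^{\textup{(C)}}\,du\geqslant\int_k^{k+1}g'\cdot{}_up_x^{\textup{(B)}}\,du$; summing over $k$ and adding back the common $g$-and-integer-age terms gives $\mathbb{E}_{\textup{(UDD)}}g(T_x)\geqslant$ the wrong direction — so I must be careful with signs. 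Re-examining: after integration by parts the expectation is $g(0)+\sum_k\int_k^{k+1}g'(u)\,{}_up_x\,du$ (the boundary terms telescope to $g(0){}_0p_x=g(0)$, using ${}_up_x\to0$), and here a \emph{larger} ${}_up_x$ paired with $g'\leqslant0$ makes the integral \emph{more negative}, so $\mathbb{E}_{\textup{(UDD)}}g(T_x)\leqslant\mathbb{E}_{\textup{(C)}}g(T_x)\leqslant\mathbb{E}_{\textup{(B)}}g(T_x)$, which is exactly \eqref{exp_ineq_1}; the non-decreasing case ($g'\geqslant0$) reverses every inequality and gives \eqref{exp_ineq_2}. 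The main obstacle I anticipate is handling convergence/tail issues rigorously — justifying the telescoping of boundary terms and the interchange of sum and integral when the support of $T_x$ is infinite, and covering the degenerate cases $p_{x+k}\in\{0,1\}$ where a unit interval carries an atom or zero mass; these require only that $g$ be bounded on the relevant range or a mild growth condition, which I would spell out, but the algebraic heart of the argument is the clean pointwise ordering of the three survival curves.
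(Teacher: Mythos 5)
Your overall strategy --- integrate by parts on each unit interval, note that the boundary terms depend only on the integer-age values ${}_kp_x$ common to all three interpolations, and reduce everything to the sign of $\int_k^{k+1} g'(u)\bigl({}_up_x^{\textup{(C)}}-{}_up_x^{\textup{(UDD)}}\bigr)\,du$ and its (B)--(C) analogue --- is essentially the paper's, and it is the right strategy. However, your central pointwise inequality is stated backwards. By the arithmetic--geometric--harmonic mean inequality applied to $s(x+k)$ and $s(x+k+1)$ with weights $1-t$ and $t$, the correct ordering is
\begin{align*}
{}_{k+t}p_x^{\textup{(UDD)}}\;\geqslant\;{}_{k+t}p_x^{\textup{(C)}}\;\geqslant\;{}_{k+t}p_x^{\textup{(B)}},\qquad t\in[0,1],
\end{align*}
not the reverse. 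Convexity of $t\mapsto (p_{x+k})^t$ puts the function \emph{below} its chord, i.e. $(p_{x+k})^t\leqslant 1-tq_{x+k}$ (you invoke convexity but draw the opposite conclusion), and Jensen applied to $1/s$ gives $1/s^{\textup{(B)}}\geqslant 1/s^{\textup{(C)}}$, hence $s^{\textup{(B)}}\leqslant s^{\textup{(C)}}$. A numerical check: with $s(x+k)=1$, $s(x+k+1)=1/4$, $t=1/2$ the three interpolated values are $0.625$ (UDD), $0.5$ (C), $0.4$ (B).

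This error is then cancelled by a second sign error in your final assembly: from your claimed ordering ${}_up^{\textup{(UDD)}}\leqslant{}_up^{\textup{(C)}}\leqslant{}_up^{\textup{(B)}}$ together with $g'\leqslant0$, the integral $\int_k^{k+1} g'(u)\,{}_up_x\,du$ would be \emph{most} negative for (B), forcing $\mathbb{E}_{\textup{(B)}}g\leqslant\mathbb{E}_{\textup{(C)}}g\leqslant\mathbb{E}_{\textup{(UDD)}}g$, the opposite of \eqref{exp_ineq_1} --- yet you conclude \eqref{exp_ineq_1}. With the corrected pointwise ordering the deduction is clean: multiplying by $g'\leqslant0$ reverses the ordering, so $\int g'\,{}_up^{\textup{(UDD)}}\leqslant\int g'\,{}_up^{\textup{(C)}}\leqslant\int g'\,{}_up^{\textup{(B)}}$ on each interval, which gives \eqref{exp_ineq_1}; the non-decreasing case reverses everything. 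So the proof is repairable by fixing the pointwise lemma and redoing the last step, but as written it rests on a false intermediate claim rescued by a compensating mistake. Your remaining concerns (telescoping when $g$ may jump at integers, tail convergence) are legitimate but minor; the paper sidesteps them by working interval by interval with the difference of the two densities, where the boundary terms either vanish (the function $z$ in the (C)--(B) comparison is zero at both endpoints) or are controlled by the maximum of a convex function attained at the endpoints (the (C)--(UDD) comparison).
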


\bigskip
We now divide every entire year of insurance into $j\in\mathbb{N}$ equal pieces of length $1/j$: 
\begin{align}\label{intervals}
\hspace{-0.5cm}
\begin{cases}
\text{1'th year:}\left[0,\,\frac{1}{j}\right),\,\left[\frac{1}{j},\,\frac{2}{j}\right),\,\ldots,\,
\left[1-\frac{1}{j},\,1\right),\\
\text{2'nd year:}\left[1,\,1+\frac{1}{j}\right),\,\left[1+\frac{1}{j},\,1+\frac{2}{j}\right),\,\ldots,\,
\left[2-\frac{1}{j},\,2\right),\\
\vdots\\
\text{n'th year:}\left[n-1,\,n-1+\frac{1}{j}\right),\,\left[n-1+\frac{1}{j},\,n-1+\frac{2}{j}\right),\,\ldots,\,
\left[n-\frac{1}{j},\,n\right).
\end{cases}
\end{align}

In Proposition \ref{prop:j_times}, we compute the $m$-th moment of the random variable $\nu^{([T_x j]+1)/j}$, $j\in\mathbb{N}$, where the future lifetime $T_x$ is distributed over the intervals in \eqref{intervals}. Now, the insurance deferment may consist of an entire year plus some part of the year. We denote ''$l*n_1$'', where $l\in\mathbb{N}_0$ provides years, and $n_1\in\{0,\,1,\,\ldots,\,j-1\}$ means the number of periods whose length is $1/j$. For instance, if $j=6$, then $l=2$ and $n_1=2$ describe the deferment of two years and four months. 

The next little Lemma is needed to express the probability of $T_x$ falling into the short interval under the constant force of mortality assumption (C).  

\bigskip

\begin{lem}\label{lem:j_times}
Let $x,\,k\in\mathbb{N}_0$, $j\in\mathbb{N}$, and $d=0,\,1,\,\ldots,\,j-1$. Then, under the constant force of mortality assumption \eqref{C},
\begin{align*}
{_{k+\frac{d}{j}}}p_x-{_{k+\frac{d+1}{j}}}p_x
={_k}p_x\cdot \left(p_{x+k}\right)^{\frac{d}{j}}\left(1-\left(p_{x+k}\right)^{\frac{1}{j}}\right).
\end{align*}
\end{lem}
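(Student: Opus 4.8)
The plan is to derive the identity directly from the definition of the deferred survival probability together with the interpolation rule \eqref{C}, so that the argument is essentially a one-line computation once the right form of \eqref{C} is isolated. First I would recall that $_{u}p_x = s(x+u)/s(x)$ for every $u\geqslant 0$, and that by multiplicativity of the survival function one has $_{k+t}p_x = {_k}p_x\cdot {_t}p_{x+k}$ for $k\in\mathbb{N}_0$ and $t\in[0,1]$. The crucial input is that under \eqref{C} one has $_t p_{x+k} = (p_{x+k})^t$ for $t\in[0,1]$, which is precisely \eqref{C} rewritten as $s(x+k+t)/s(x+k) = (p_{x+k})^t$; this is already noted in the text following \eqref{C}.

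Next I would specialise $t$ to the two values $d/j$ and $(d+1)/j$. Because $0\leqslant d\leqslant j-1$, both fractions lie in $[0,1]$; in particular the endpoint $(d+1)/j = 1$ occurring when $d=j-1$ is still within the range of validity of \eqref{C}, and there it reduces to $_1 p_{x+k} = p_{x+k}$, consistently with the one-year survival probability. Hence ${_{k+d/j}}p_x = {_k}p_x\cdot(p_{x+k})^{d/j}$ and ${_{k+(d+1)/j}}p_x = {_k}p_x\cdot(p_{x+k})^{(d+1)/j}$.

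Finally, subtracting the second expression from the first and factoring out ${_k}p_x\cdot(p_{x+k})^{d/j}$ gives $ {_{k+d/j}}p_x - {_{k+(d+1)/j}}p_x = {_k}p_x\cdot(p_{x+k})^{d/j}\bigl(1-(p_{x+k})^{1/j}\bigr)$, which is the assertion. I do not expect any genuine obstacle; the only point that deserves a sentence of justification is that \eqref{C} is being applied solely to arguments inside a single unit interval $[x+k,\,x+k+1]$, which is exactly what $d/j,\,(d+1)/j\in[0,1]$ guarantees.
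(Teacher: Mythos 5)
Your proposal is correct and follows essentially the same route as the paper: both apply the interpolation \eqref{C} on the single unit interval $[x+k,\,x+k+1]$ to write $s(x+k+t)=s(x+k)(p_{x+k})^{t}$ at $t=d/j$ and $t=(d+1)/j$, subtract, and normalise by $s(x)$ (your use of $_{k+t}p_x={_k}p_x\cdot{_t}p_{x+k}$ is just that division restated). No gaps.
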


Let us observe that Lemma \ref{lem:no_greater} is not necessarily valid for intervals shorter than one year; it does not apply to Propositions \ref{prop:j_times} and \ref{prop:j_[T]_nu_x}.

\bigskip

\begin{proposition}\label{prop:j_times}
Say that the survival function $s(x)$, $x\in\mathbb{N}_0$ is interpolated according to the constant mortality force assumption \eqref{C} and let $T_x$ denote the future lifetime of a person being of $x\in\mathbb{N}_0$ years old. If $j,\,n\in\mathbb{N}$, $m,\,l\in\mathbb{N}_0$, and $n_1\in\{0,\,1,\,\ldots,\,j-1\}$, then
\begin{align}\label{prop5_eq1}\nonumber
&^m_{l*n_1|}\left(A^{(j)}\right)_{x:\actuarialangle{n}}^1:=
\mathbb{E}\left(\nu^{\frac{[T_x\,j]+1}{j}}\right)^m\mathbbm{1}_{\{l*n_1\leqslant T_x<n+l*n_1\}}\\
&=\sum_{k=l}^{n+l-1}\nu^{mk}\cdot{_k}p_x\cdot\left(1-\left(p_{x+k}\right)^{\frac{1}{j}}\right)\sum_{d=n_1}^{j-1}\nu^{\frac{(d+1)m}{j}}\left(p_{x+k}\right)^\frac{d}{j}
\\
&+
\nu^{(n+l)m}\cdot{_{n+l}p}_x\left(1-(p_{x+n+l})^\frac{1}{j}\right)\sum_{d=0}^{n_1-1}\nu^{\frac{(d+1)m}{j}}\left(p_{x+n+l}\right)^\frac{d}{j}.
\end{align}
\end{proposition}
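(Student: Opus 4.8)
The plan is to use that the integrand is a step function on the grid \eqref{intervals}. First I would note that if $T_x$ lies in the sub-period $\left[k+\tfrac{d}{j},\,k+\tfrac{d+1}{j}\right)$ with $k\in\mathbb{N}_0$ and $d\in\{0,1,\ldots,j-1\}$, then $[T_x\,j]=kj+d$, so $\big(\nu^{([T_x\,j]+1)/j}\big)^m=\nu^{mk}\,\nu^{(d+1)m/j}$ is constant on that sub-period. Hence
\[
\mathbb{E}\big(\nu^{([T_x\,j]+1)/j}\big)^m\mathbbm{1}_{\{l*n_1\leqslant T_x<n+l*n_1\}}
=\sum_{(k,d)\in\mathcal{I}}\nu^{mk}\,\nu^{(d+1)m/j}\;\mathbb{P}\!\Big(k+\tfrac{d}{j}\leqslant T_x<k+\tfrac{d+1}{j}\Big),
\]
where $\mathcal{I}$ is the collection of pairs $(k,d)$ whose sub-period is picked out by the indicator, and each probability above equals ${_{k+d/j}}p_x-{_{k+(d+1)/j}}p_x$, a difference of tail values.

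Next I would invoke Lemma \ref{lem:j_times} to rewrite ${_{k+d/j}}p_x-{_{k+(d+1)/j}}p_x={_k}p_x\,(p_{x+k})^{d/j}\big(1-(p_{x+k})^{1/j}\big)$, pull the year-dependent factor $\nu^{mk}\,{_k}p_x\big(1-(p_{x+k})^{1/j}\big)$ out of the inner sum over $d$, and organize $\mathcal{I}$ into the full-year blocks together with the two partial blocks carved out by the fractional deferment $n_1/j$: the tail of year $l$ (sub-periods $d=n_1,\ldots,j-1$) and the head reaching into year $n+l$ (sub-periods $d=0,\ldots,n_1-1$). Resumming the ``tail-type'' contributions into the double sum of \eqref{prop5_eq1} and the ``head-type'' contributions into the trailing sum yields the claimed identity; the case $n_1=0$ is the one where the empty sum $\sum_{d=0}^{-1}$ wipes out the trailing term and only the $n$ complete years $k=l,\ldots,n+l-1$ contribute.

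The main obstacle is just this index bookkeeping: one has to confirm that every sub-period of the deferred window is counted exactly once, carrying the right exponent of $\nu$, and that nothing outside the window leaks in. Two degenerate cases also deserve a line each, since no strict positivity of $p_{x+k}$ is assumed here: if $p_{x+k}=0$, then $(p_{x+k})^{d/j}=0$ for $d\geqslant1$, so in the limiting sense the entire mass of year $k$ concentrates in the sub-period $d=0$ and the two sides still agree; if $p_{x+k}=1$, then ${_k}p_x\big(1-(p_{x+k})^{1/j}\big)=0$ and year $k$ drops out of both sides. No convergence argument is required because $n$ is finite, which is what makes this proof shorter than those of Propositions \ref{prop:main}--\ref{prop:[T]_nu}.
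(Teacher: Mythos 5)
Your overall strategy coincides with the paper's: on each sub-period $\left[k+\tfrac{d}{j},\,k+\tfrac{d+1}{j}\right)$ the integrand $\nu^{m([T_xj]+1)/j}=\nu^{mk}\nu^{(d+1)m/j}$ is constant, the probability of landing there is the tail difference ${_{k+d/j}}p_x-{_{k+(d+1)/j}}p_x$, and Lemma \ref{lem:j_times} turns that difference into ${_k}p_x\,(p_{x+k})^{d/j}\bigl(1-(p_{x+k})^{1/j}\bigr)$. Up to that point everything is sound, and your limiting remarks for $p_{x+k}\in\{0,1\}$ are a harmless bonus.

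The gap sits in the one step you label ``just index bookkeeping'' and then do not perform. The partition of the window $[l+n_1/j,\,n+l+n_1/j)$ that you correctly describe --- the tail of year $l$ (sub-periods $d=n_1,\ldots,j-1$), the full years $k=l+1,\ldots,n+l-1$ (all sub-periods $d=0,\ldots,j-1$), and the head of year $n+l$ (sub-periods $d=0,\ldots,n_1-1$) --- does \emph{not} resum to the right-hand side of \eqref{prop5_eq1}: the double sum there takes $d=n_1,\ldots,j-1$ for \emph{every} $k=l,\ldots,n+l-1$, and so omits the sub-periods $d=0,\ldots,n_1-1$ of each intermediate year. The two sides of your claimed resummation differ by
\[
\sum_{k=l+1}^{n+l-1}\ \sum_{d=0}^{n_1-1}\nu^{\left(k+\frac{d+1}{j}\right)m}\,{_k}p_x\left(p_{x+k}\right)^{\frac{d}{j}}\left(1-\left(p_{x+k}\right)^{\frac{1}{j}}\right),
\]
which vanishes only when $n_1=0$ (your empty-trailing-sum case) or $n=1$. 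So for $n\geqslant 2$ and $n_1\geqslant 1$ the sentence ``resumming \ldots yields the claimed identity'' is not true as stated: either your identification of the index set $\mathcal{I}$ or the displayed target formula has to give. Note that the paper's own proof opens by asserting exactly the same double sum as the first equality, with no justification; the bookkeeping you propose to ``confirm'' is therefore not a formality but the entire content of the statement, and it is the step that fails. You should either restrict the claim to $n_1=0$ or $n=1$, or supply the missing intermediate-year terms and reconcile them with the stated right-hand side.
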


In the last proposition, we provide the formula to compute the $m$-th moment of the random variable $[j\cdot T_x+1]\cdot\nu^{T_x}$, $j\in\mathbb{N}$, where the future lifetime $T_x$ again is distributed over the intervals in \eqref{intervals}. Such a random variable describes the present value of the insured amount that increases $j$ times per year.

\bigskip

\begin{proposition}\label{prop:j_[T]_nu_x}
Say that the survival function $s(x)$, $x\in\mathbb{N}_0$ is interpolated according to the constant force of mortality assumption \eqref{C} and let $T_x$ denote the future lifetime of a person being of $x\in\mathbb{N}_0$ years old. If $j,\,n\in\mathbb{N}$, $m,\,l\in\mathbb{N}_0$, $n_1\in\{0,\,1,\,\ldots,\,j-1\}$, $p_{x+k}>0$ for all $l\leqslant k \leqslant n+l-1$, then
\begin{align*}
&^m_{l*n_1|}\left(I^{(j)}\bar{A}\right)_{x:\actuarialangle{n}}^1:=
\mathbb{E}\left([j\cdot T_x+1]\cdot \nu^{T_x}\right)^m\mathbbm{1}_{\{l*n_1\leqslant T_x < n+l*n_1\}}=\\
&
\sum_{k=l}^{n+l-1}\frac{{_k}p_x\cdot\nu^{mk}\cdot\log p_{x+k}}{\log(\nu^m\cdot p_{x+k})}
\left(1-\nu^\frac{m}{j}\left(p_{x+k}\right)^\frac{1}{j}\right)
\sum_{d=n_1}^{j-1}\left(d+jk+1\right)^m\nu^{\frac{dm}{j}}
\left(p_{x+k}\right)^\frac{d}{j}\\
&+\frac{\nu^{(n+k)m}\cdot{_{n+l}}p_x\cdot\log p_{x+n+l}\cdot\left(1-\left(p_{x+n+l}\right)^{\frac{1}{j}}\cdot\nu^{\frac{m}{j}}\right)}{\log(\nu^m\cdot p_{x+n+l})}\times\\
&\times\sum_{d=0}^{n_1-1}(d+j\cdot(n+l)+1)^m
\nu^{\frac{dm}{j}}
\left(p_{x+n+l}\right)^\frac{d}{j}.
\end{align*}
\end{proposition}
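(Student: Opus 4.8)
The plan is to reduce the computation to a finite sum over the length-$1/j$ sub-intervals in \eqref{intervals}, on each of which the integer factor $[jT_x+1]$ is constant, then to evaluate the one resulting elementary integral against the constant-force density \eqref{cond_dens_C}, and finally to regroup the double sum into the displayed shape. In effect this is the computation behind Proposition \ref{prop:[T]_nu} (recovered when $j=1$, $n_1=0$) carried out on the finer grid \eqref{intervals}, the relevant one-interval integral being a $\nu^{m}$-weighted refinement of the identity in Lemma \ref{lem:j_times}.

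First I would localise the integrand. Writing $T_x=k+\tfrac{d}{j}+s$ with $k\in\mathbb{N}_0$, $d\in\{0,\ldots,j-1\}$ and $s\in[0,\tfrac1j)$, one gets $jT_x+1=jk+d+1+js\in[jk+d+1,\,jk+d+2)$, so $[jT_x+1]=jk+d+1$ is constant on $[k+\tfrac dj,\,k+\tfrac{d+1}{j})$, and there $\bigl([jT_x+1]\nu^{T_x}\bigr)^m=(jk+d+1)^m\nu^{mT_x}$. Since $\{l*n_1\leqslant T_x<n+l*n_1\}$ is the event $l+\tfrac{n_1}{j}\leqslant T_x<n+l+\tfrac{n_1}{j}$, it is the disjoint union of exactly those sub-intervals whose left endpoint lies in $[l+\tfrac{n_1}{j},\,n+l+\tfrac{n_1}{j})$, and hence
\[
\mathbb{E}\bigl([jT_x+1]\nu^{T_x}\bigr)^m\mathbbm{1}_{\{l*n_1\leqslant T_x<n+l*n_1\}}=\sum_{(k,d)}(jk+d+1)^m\int_{k+d/j}^{k+(d+1)/j}\nu^{mu}f_x(u)\,du ,
\]
the sum running over that index set.

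Next I would evaluate the generic inner integral. On $(k,k+1)$ the density \eqref{cond_dens_C} is $f_x(u)={_k}p_x\,(p_{x+k})^{u-k}\log\tfrac{1}{p_{x+k}}$; substituting $u=k+v$ and integrating the exponential $(\nu^m p_{x+k})^{v}$ over $v\in[\tfrac dj,\tfrac{d+1}{j}]$, using $\int_{d/j}^{(d+1)/j}a^{v}\,dv=\bigl(a^{(d+1)/j}-a^{d/j}\bigr)/\log a$ with $a=\nu^m p_{x+k}$ and $\log\tfrac{1}{p_{x+k}}=-\log p_{x+k}$, gives
\[
\int_{k+d/j}^{k+(d+1)/j}\nu^{mu}f_x(u)\,du=\frac{{_k}p_x\,\nu^{mk}\log p_{x+k}}{\log(\nu^m p_{x+k})}\bigl(1-\nu^{m/j}(p_{x+k})^{1/j}\bigr)\,\nu^{dm/j}(p_{x+k})^{d/j}
\]
(for $\nu=1$ this is Lemma \ref{lem:j_times}). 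Multiplying by $(jk+d+1)^m$ and pulling the $k$-dependent factor out of the inner sum over $d$ already yields the summands in the statement.

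The remaining step, which I expect to be the main obstacle, is the re-indexing: interior integer years $k$ contribute the full block $d\in\{0,\ldots,j-1\}$, but the first year $k=l$ contributes only $d\in\{n_1,\ldots,j-1\}$ and the last relevant year only $d\in\{0,\ldots,n_1-1\}$, so the bookkeeping that collapses the double sum to the two sums displayed in the statement — with the degenerate case $n_1=0$, where no truncation occurs, treated separately — needs care; everything else is a single elementary integral. I would close by noting that the hypothesis $p_{x+k}>0$ makes \eqref{cond_dens_C} the correct density on each relevant year, and that the apparent singularities of the summands as $p_{x+k}\to1$ or $\nu^m p_{x+k}\to1$ are removable, the limiting values being read off exactly as in the remarks following Propositions \ref{prop:main} and \ref{prop:[T]_nu}.
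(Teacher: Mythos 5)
Your overall route is the same as the paper's: partition the event into the length-$1/j$ sub-intervals of \eqref{intervals}, pull out the constant value $jk+d+1$ of $[jT_x+1]$ on each, and evaluate the remaining exponential integral against the density \eqref{cond_dens_C}. Your generic integral $\int_{k+d/j}^{k+(d+1)/j}\nu^{mu}f_x(u)\,du$ is computed correctly and agrees with the one in the paper's proof, and your consistency checks ($j=1$, $n_1=0$ recovering Proposition \ref{prop:[T]_nu}; $\nu=1$ recovering Lemma \ref{lem:j_times}) are right.

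The step you defer --- ``the bookkeeping that collapses the double sum to the two sums displayed'' --- is, however, not merely delicate: it cannot be carried out as described. Your partition of $\{l+\tfrac{n_1}{j}\leqslant T_x<n+l+\tfrac{n_1}{j}\}$ is the correct one: the year $k=l$ contributes $d\in\{n_1,\dots,j-1\}$, every interior year $k\in\{l+1,\dots,n+l-1\}$ contributes the full block $d\in\{0,\dots,j-1\}$, and $k=n+l$ contributes $d\in\{0,\dots,n_1-1\}$, for a total of $nj$ sub-intervals. The displayed formula, by contrast, sums $d$ from $n_1$ to $j-1$ for \emph{every} $k=l,\dots,n+l-1$ and then adds the tail at $k=n+l$, i.e.\ it uses $n(j-n_1)+n_1$ sub-intervals; the two index sets coincide only when $n_1=0$ or $n=1$. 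So if you finish your regrouping faithfully you will obtain extra terms (the blocks $d\in\{0,\dots,n_1-1\}$ for $k=l+1,\dots,n+l-1$) that are absent from the target expression. The paper's own proof simply starts from the smaller index set without comment, so what your (correct) partition actually exposes is that the stated identity holds as written only for $n_1=0$ or $n=1$ --- which covers all of the paper's numerical examples --- and the same remark applies to Proposition \ref{prop:j_times}. To close your argument you must either restrict to those cases or state and prove the corrected formula with the full interior blocks; as it stands, the key regrouping is both missing and, toward the displayed right-hand side, impossible in general.
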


{\sc Remark 6}: {\it If $p_{x+k}\to0$ in Proposition \ref{prop:j_[T]_nu_x}, then $\log p_{x+k}/\log (\nu^m\cdot p_{x+k})\to1$. If $\nu^m\cdot p_{x+k}\to1$, then
\begin{align*}
\frac{1-\nu^{m/j}(p_{x+k})^{1/j}}{\log(\nu^m\cdot p_{x+k})}\to-\frac{1}{j}.
\end{align*}
} 

\bigskip

As mentioned at the beginning of this section, formulas of Propositions \ref{prop:main}--\ref{prop:j_[T]_nu_x} can be modified to reflect on different types of insurance products. For example, the present value of the yearly decreasing insured amount is characterized as follows 
\begin{align*}
&\mathbb{E}\left(\nu^{T_x}\left(n+l-[T_x]\right)\right)^m\mathbbm{1}_{\{l\leqslant T_x<n+l\}}
=\sum_{k=l}^{l+n-1}\int_{k}^{k+1}\nu^{tm}(n+l-k)^m f_x(t)\,dt\\
&=\sum_{k=l}^{l+n-1}(n+l-k)^m\cdot \frac{{_k}p_x}{(p_{x+k})^k}\cdot\log\frac{1}{p_{x+k}}
\int_{k}^{k+1}\nu^{tm}\cdot(p_{x+k})^t\,dt,
\end{align*}
where the last integral is the same as \eqref{the_same}, used in Proposition \ref{prop:[T]_nu}.

\section{Examples}\label{sec:examples}

To convince the correctness of the formulas given in Section \ref{sec:results}, we select data from the human mortality database \cite{HMD_LTU_2025} and check the outputs of the presented formulas. We also give comparisons of the same characteristics computed under the (UDD) and (B) assumptions. The corresponding formulas under Balducci's mortality assumption (B) can be found in \cite{Balducci}, while the same under (UDD) are mainly derived in \cite{bowers1986actuarial}. All the presented computations are implemented using the software \cite{Mathematica,Rsoftware}, while visualizations are produced using \texttt{ggplot2} \cite{ggplot2}.  

\bigskip

\begin{ex}\label{ex1}
Say that a 50-year-old person buys a life insurance policy for nine years with a two-year deferment. We assume the yearly interest rate of $i=3\%$ and compute the net single premiums and the second moments of the corresponding random variables under assumptions (UDD), (C), and (B).
\end{ex}

\bigskip

In this example, $n=7$, $l=2$, $\nu=1/1.03$, and the mortality data are
\begin{table}[h!]
\centering
\begin{tabular}{|c|c|c|c|c|c|c|c|c|c|c|}
\hline
$x$ & $50$ & $51$ & $52$ & $53$&$54$&$55$&$56$&$57$&$58$&$59$ \\
\hline
$l_x$ & 94058 & 93563 &  93048 & 92500&91866&91228&90450&89649&88868&88107 \\
\hline
\end{tabular}
\caption{The passage from Lithuanian mortality data table of 2024 \cite{HMD_LTU_2025}; both sexes; $l_0=100000$.}
\label{table:1}
\end{table}

According to the selected data, we obtain Table \ref{table:2}

\begin{table}[h!]
\centering
\def\arraystretch{1.5}
\begin{tabular}{|c|c|c|c|c|} 
\hline
Expectation & Proposition for (C)& (UDD) & (C) & (B)\\ 
\hline
${_{2|}}\bar{A}^1_{50:\actuarialangle{7}}$ & \ref{prop:main} & 0.0444324 & 0.0444333& 0.0444342\\ 
\hline
$^2_{2|}\bar{A}_{50:\actuarialangle{7}}^1$ & \ref{prop:main} & 0.0377097 & 0.0377111 &0.0377126\\
 \hline
${{_{2|}}\mathop{e}\limits^\circ}_{50:\actuarialangle{7}}$ & \ref{prop:T_x} & 0.3005752 & 0.3005404 & 0.3005352\\
\hline
$\mathbb{E}T_{50}^2\mathbbm{1}_{\{2\leqslant T_{50}< 9\}}$ & \ref{prop:T_x} & 1.9223564 & 1.9219430& 1.9394924 \\
\hline
$_{2|}\left(\bar{I}\bar{A}\right)_{50:\actuarialangle{7}}^1$ & \ref{increasing} & 0.2491531 & 0.2491289 & 0.2492013\\ 
\hline
$^2_{2|}\left(\bar{I}\bar{A}\right)_{50:\actuarialangle{7}}^1$ & \ref{increasing} & 1.2843333 & 1.2841040 & 1.2956242\\ 
\hline
$_{2|}\left(I\bar{A}\right)_{50:\actuarialangle{7}}^1$ & \ref{prop:[T]_nu} & 0.2714787 & 0.2714842 &0.2736986\\ 
\hline
$^2_{2|}\left(I\bar{A}\right)_{50:\actuarialangle{7}}^1$ & \ref{prop:[T]_nu} & 1.4999712 & 1.5000320 &1.5124412\\ 
\hline
\end{tabular}
\caption{Characteristics of the net single premiums according to the data of Example \ref{ex1} and three different interpolations of $s(x)$.}
\label{table:2}
\end{table}
\newpage
If, in addition, $j=12$ and $n_1=0$ (monthly intervals with deferment of two years), then we obtain Table \ref{table:3}.

\begin{table}[h!]
\centering
\def\arraystretch{1.5}
\begin{tabular}{|c|c|c|c|c|} 
\hline
Expectation & Proposition for (C)& (UDD) & (C) & (B)\\ 
\hline
$_{2*0|}\left(A^{(12)}\right)_{50:\actuarialangle{7}}^1$ & \ref{prop:j_times} & 	
0.04437773 & 0.04437859 &0.04472723\\ 
\hline
$^2_{2*0|}\left(A^{(12)}\right)_{50:\actuarialangle{7}}^1$ & \ref{prop:j_times} & 0.03761687 & 0.03761831 &  0.03790957 \\ 
\hline
$_{2*0|}\left(I^{(12)}\bar{A}\right)_{50:\actuarialangle{7}}^1$ & \ref{prop:j_[T]_nu_x} & 3.01206234 &	
3.01177542& 3.03746723\\ 
\hline
$^2_{2*0|}\left(I^{(12)}\bar{A}\right)_{50:\actuarialangle{7}}^1$ & \ref{prop:j_[T]_nu_x} & 187.437832 &187.404907&	
189.083904\\ 
\hline
\end{tabular}
\caption{Characteristics of the net single premiums according to the data of Example \ref{ex1} and three different interpolations of $s(x)$.}
\label{table:3}
\end{table}

\newpage
\begin{ex}
Say that an $x$-years-old person, where $x \in \{18,19,\dots,70\}$, buys a life insurance policy for nine years with a two-year deferment and the unit payoff is immediate after the insured's death. We assume the yearly interest rate of $i=3\%$ and compute the net single premium ${}^{1}_{2\mid}\,\overline{A}_{x:\actuarialangle{7}}$ under assumptions (UDD), (B), and (C), when the insurer's age $x$ varies from $18$ to $70$.
\end{ex}

\bigskip

In this example, $n=7$, $l=2$, $\nu=1/1.03$, and the provided partial mortality data are
\begin{table}[h!]
\centering
\begin{tabular}{|c|c|c|c|c|c|c|c|c|}
\hline
$x$ & $18$ & $19$ & $\ldots$ & $70$&$71$&$\ldots$&$78$&$79$ \\
\hline
$l_x$&99461&99421&$\ldots$&73798&72062&$\ldots$&57442&55056 \\
\hline
\end{tabular}
\caption{The passage from Lithuanian mortality data table of 2024 \cite{HMD_LTU_2025}; both sexes; $l_0=100000$.}
\label{table:4}
\end{table}

In Figures \ref{fig:Ax7_0}, \ref{fig:Ax7}, and \ref{fig:Ax7_1} below, we use the interpolations of (UDD), (C), and (B) to illustrate the growth of the net single premiums as the insurer's age increases. 

\begin{figure}[H]
    \centering
    \includegraphics[width=0.9\textwidth]{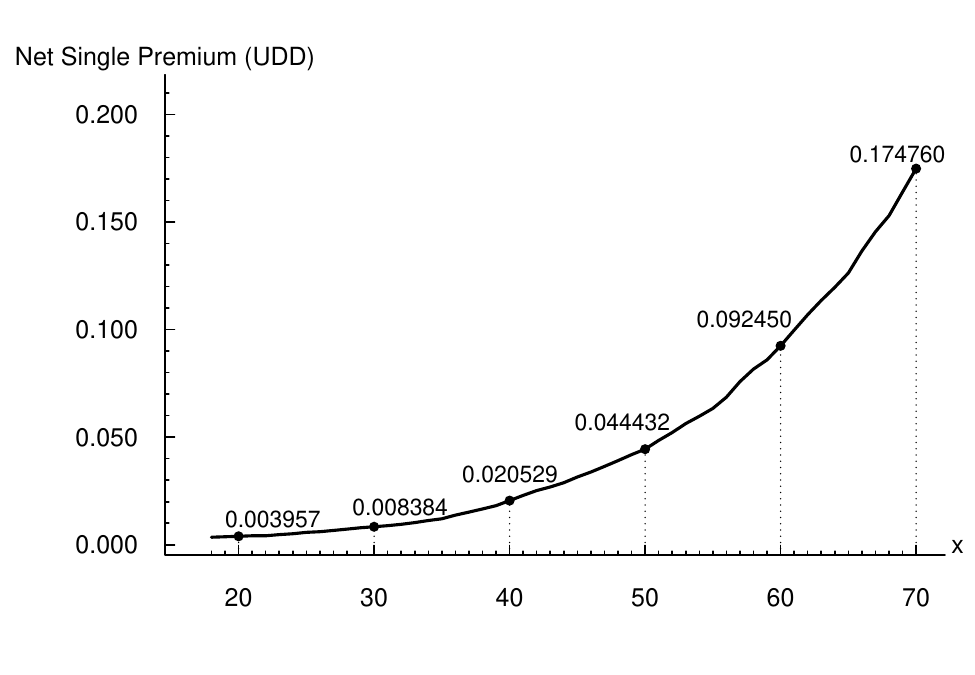}
    \caption{The values of ${}^{1}_{2\mid}\,\overline{A}_{x:\actuarialangle{7}}$ for $x \in \{18,\dots,70\}$ under the assumption~(UDD).}
    \label{fig:Ax7_0}
\end{figure}

\begin{figure}[H]
    \centering
    \includegraphics[width=0.9\textwidth]{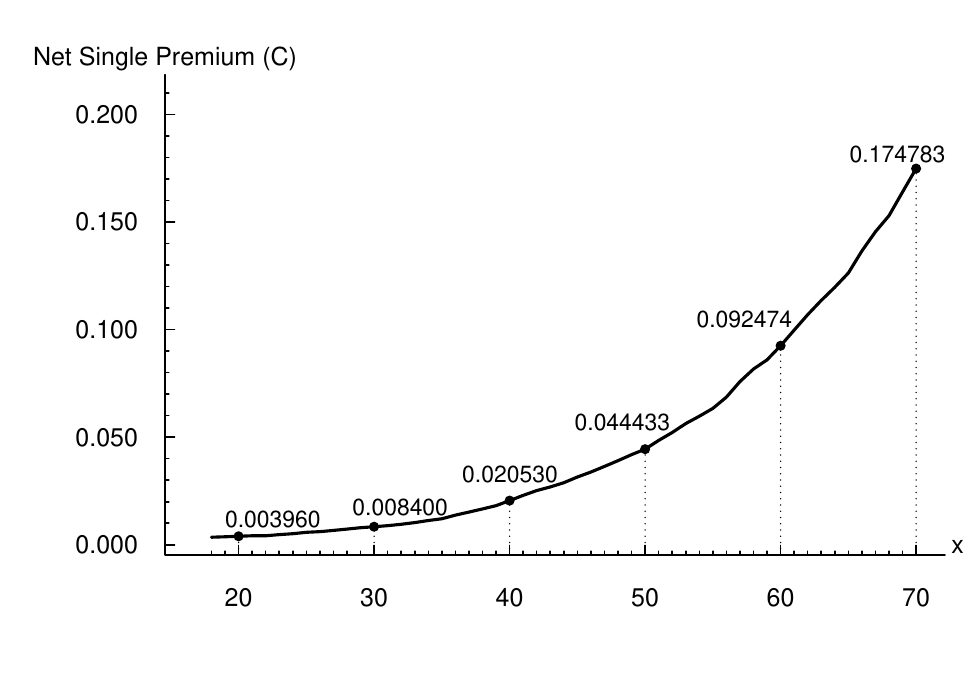}
    \caption{The values of ${}^{1}_{2\mid}\,\overline{A}_{x:\actuarialangle{7}}$ for $x \in \{18,\dots,70\}$ under the assumption~(C).}
    \label{fig:Ax7}
\end{figure}

\begin{figure}[H]
    \centering
    \includegraphics[width=0.9\textwidth]{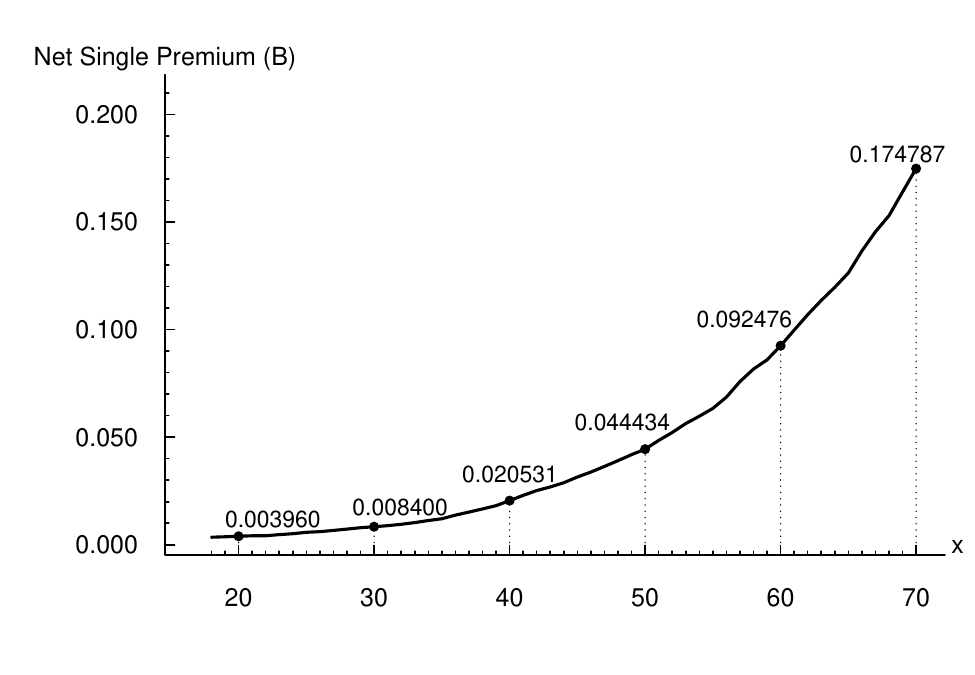}
    \caption{The values of ${}^{1}_{2\mid}\,\overline{A}_{x:\actuarialangle{7}}$ for $x \in \{18,\dots,70\}$ under the assumption~(B).}
    \label{fig:Ax7_1}
\end{figure}

\begin{ex}\label{ex3}
Let $l=1$, $x=0$, $i=3\%$, $n\to\infty$, and consider a Gompertz survival model with parameters $\alpha=0.09$ and $\beta=0.0007$, and the continuos survival function
\begin{align}\label{Gompertz}
s(u)=\exp\!\left(-\frac{\beta}{\alpha}\left(e^{\alpha u}-1\right)\right),\,u\geqslant0.
\end{align}
We take the discrete points $s(0),\,s(1),\,\ldots$ from \eqref{Gompertz}, connect them according to (UDD), (C), (B), and compute the expectations of type as in Propositions \ref{prop:main}--\ref{prop:j_[T]_nu_x} when
$m=1$ and $m=2$. Moreover, as the survival function in \eqref{Gompertz} is continuous, we compute the same expectations using no interpolation.
\end{ex}

\bigskip
The Gompertz and Gompertz–Makeham survival laws are among the top widely used parametric mortality models, see, for example, \cite{CMSTM}.

According to \eqref{Gompertz}, the $k$-year survival probability for age $x$ is
\begin{align}\label{Gompertz_kpx}
{_k}p_x 
= \frac{s(x+k)}{s(x)}
= \exp\!\left(
     -\frac{\beta}{\alpha}e^{\alpha x}\left(e^{\alpha k}-1\right)
  \right),
\qquad x,\,k\in\mathbb{N}_0.
\end{align}
Consequently, the one-year survival probability at age $x+k\in\mathbb{N}_0$ is
\begin{align}\label{Gompertz_pxk}
p_{x+k} 
= \exp\!\left(
    -\frac{\beta}{\alpha}e^{\alpha(x+k)}\left(e^{\alpha}-1\right)
  \right).
\end{align}

The corresponding force of mortality in the Gompertz model \eqref{Gompertz} is 
\begin{equation}\label{Gompertz_force}
\mu(u)
= -\frac{d}{du}\log s(u)
= \beta e^{\alpha u},\,u\geqslant0
\end{equation}
and the probability density function of the newborn's future lifetime $T_0$ is
\begin{equation}\label{Gompertz_density}
f_0(u)
= \mu(u)\,s(u)
= \beta e^{\alpha u}
\exp\!\left(-\frac{\beta}{\alpha}\left(e^{\alpha u}-1\right)\right),
\qquad u \geqslant 0.
\end{equation}

In Figures \ref{fig:survival} and \ref{fig:pmf} we depict $s(x)$ and $s(x+1)-s(x)$, $x\in\mathbb{N}_0$ obtained from \eqref{Gompertz}.

\begin{figure}[H]
    \centering
    \includegraphics[width=0.8\textwidth]{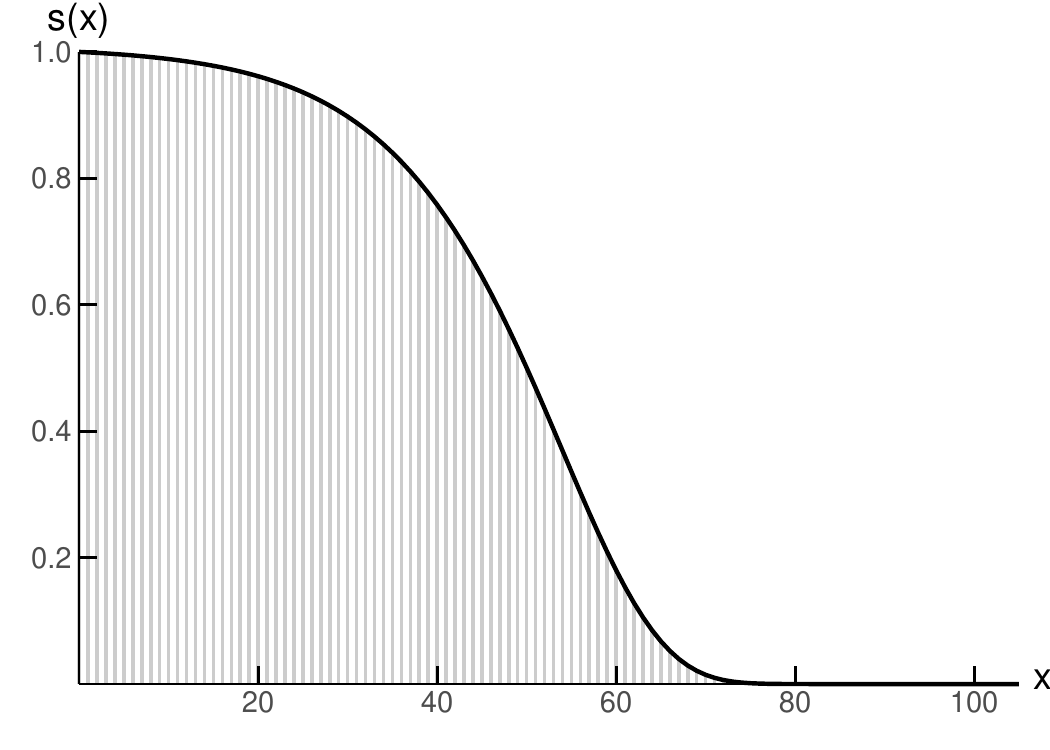}
    \caption{The Gompertz survival function 
    $s(x)=\exp\!\left(-\frac{\beta}{\alpha}\left(e^{\alpha x}-1\right)\right)$,
    $x\in\mathbb{N}_0$, $\alpha=0.09$, $\beta=0.0007$.}
    \label{fig:survival}
\end{figure}

\begin{figure}[H]
    \centering
    \includegraphics[width=0.8\textwidth]{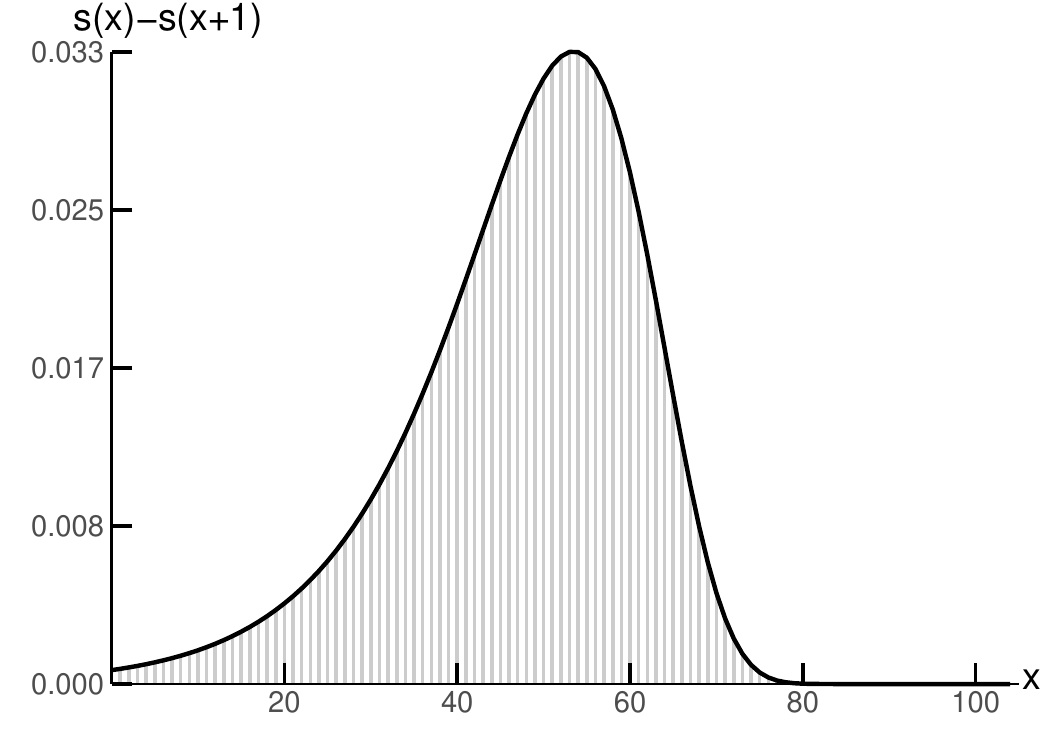}
    \caption{The Gompertz probability mass function
    $\mathbb{P}(X=x)=s(x)-s(x+1)$, $x\in\mathbb{N}_0$, $\alpha=0.09$, $\beta=0.0007$.}
    \label{fig:pmf}
\end{figure}

In Figure \ref{f1} below, we set $x=78,\,79,\,80$ into \eqref{Gompertz} and connect these points according to (UDD), (C), (B), and (G) itself, where (G) means \eqref{Gompertz}. One may observe from Figure \ref{f1} that the constant force of mortality interpolation (C) is very close to the continuous Gompertz survival function.

\begin{figure}[H]
\centering
\includegraphics[scale=0.8]{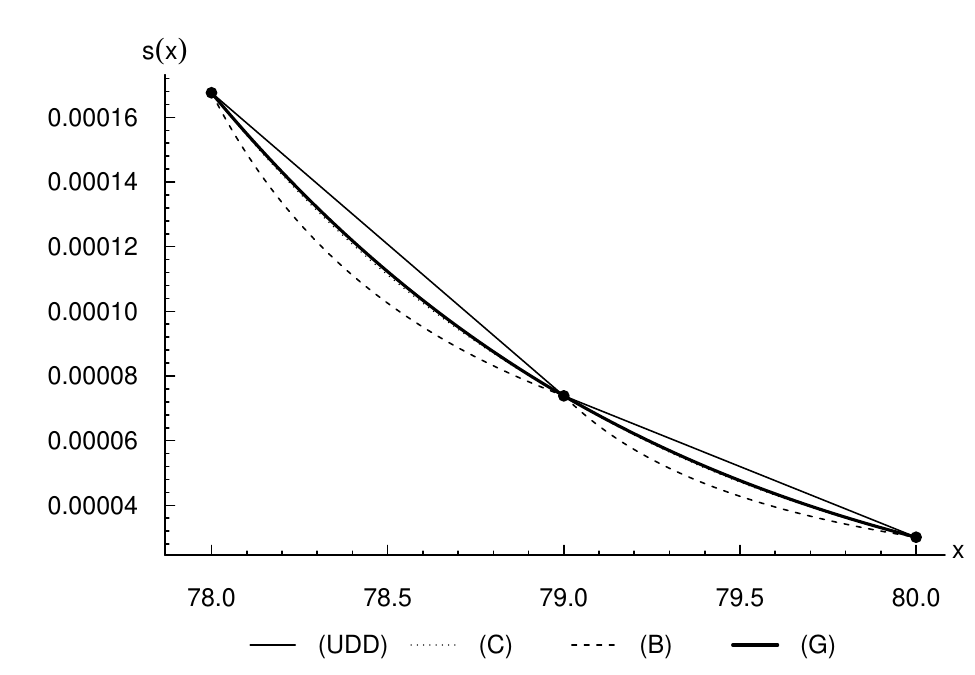}
\caption{
Interpolation of the Gompertz survival function $s(u)$ over ages 78--80 under (UDD), (B) and (C) assumptions, compared with the exact continuous Gompertz survival law (G).
}
\label{f1}
\end{figure}

In Figure \ref{f2} below, we depict derivatives $-s'(u)$ of the functions in Figure \ref{f1}. While all interpolations reproduce the same annual death probability, they allocate the probability mass differently within the year; the (C) assumption closely matches the Gompertz density, whereas (UDD) and (B) display deviations.

\begin{figure}[H]
\centering
\includegraphics[scale=0.8]{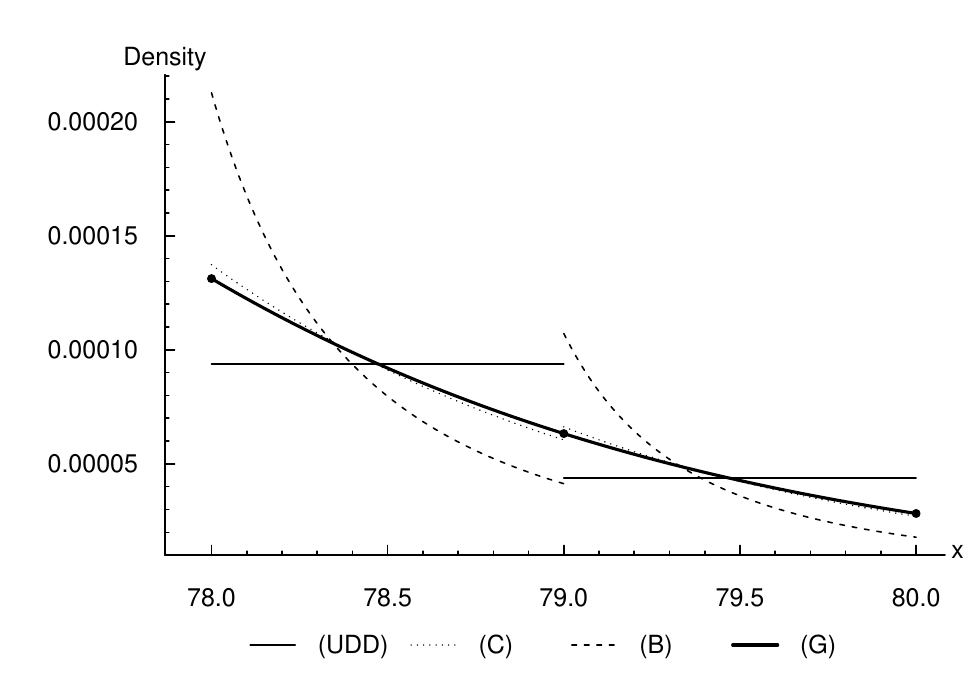}
\caption{
Interpolation of the probability density function $f_0(u)$ over ages 78--80 implied by the (UDD), (B), and (C) assumptions, compared with the exact Gompertz density (G).
}
\label{f2}
\end{figure}

According to the selected data in Example \ref{ex3}, obtain Table \ref{table:5}.

\begin{table}[h!]
\centering
\def\arraystretch{1.5}
\begin{tabular}{|c|c|c|c|c|c|} 
\hline
Expectation & Proposition for (C)& (UDD) & (C) & (B) & (G)\\ 
\hline
${_{1|}}\bar{A}_{0}$ & \ref{prop:main} & 0.2627886 & 0.2628295& 0.2629811 & 0.2627713\\ 
\hline
$^2_{1|}\bar{A}_{0}$ & \ref{prop:main} & 0.0843555 & 0.0843719 & 0.0849373 & 0.0843345\\
 \hline
${{_{1|}}\mathop{e}\limits^\circ}_{0}$ & \ref{prop:T_x} & 48.035677 & 48.028149 & 48.020770 & 48.005241\\
\hline
$\mathbb{E}T_{0}^2\mathbbm{1}_{\{1\leqslant T_{0}\}}$ & \ref{prop:T_x} & 2481.3084 & 2480.4217&2480.1802 & 2481.1415\\
\hline
$_{1|}\left(\bar{I}\bar{A}\right)_{0}^1$ & \ref{increasing} & 11.0544440 & 11.0538410 &11.0526210 & 11.054871\\ 
\hline
$^2_{1|}\left(\bar{I}\bar{A}\right)_{0}^1$ & \ref{increasing} & 123.755820 & 123.772240 &123.719034 & 123.76314\\ 
\hline
$_{1|}\left(I\bar{A}\right)_{0}^1$ & \ref{prop:[T]_nu} & 11.1861217 & 11.1884370 & 11.1899573 & 11.185961\\ 
\hline
$^2_{1|}\left(I\bar{A}\right)_{0}^1$ & \ref{prop:[T]_nu} & 126.719683 & 126.768167 &126.771983 & 126.71242\\ 
\hline
\end{tabular}
\caption{Characteristics of the net single premiums according to the data of Example \ref{ex3} and different interpolations of $s(u)$.}
\label{table:5}
\end{table}

If, in addition, $j=12$ and $n_1=0$, then we obtain Table \ref{table:6} below.

\begin{table}[h!]
\centering
\def\arraystretch{1.5}
\begin{tabular}{|c|c|c|c|c|c|} 
\hline
Expectation & Proposition for (C)& (UDD) & (C) & (B) & (G)\\ 
\hline
$_{1*n_1|}\left(A^{(j)}\right)_{0}^1$ & \ref{prop:j_times} & 0.2624651 & 0.2625057 &0.2628445 & 0.2624479\\ 
\hline
$^2_{1*n_1|}\left(A^{(j)}\right)_{0}^1$ & \ref{prop:j_times} & 0.0841479 & 0.0841641 &0.0876469 & 0.0841271\\ 
\hline
$_{1*n_1|}\left(I^{(j)}\bar{A}\right)_{0}^1$ & \ref{prop:j_[T]_nu_x} & 19.124872 &19.139761&19.143942&19.124635\\ 
\hline
$^2_{1*n_1|}\left(I^{(j)}\bar{A}\right)_{0}^1$ & \ref{prop:j_[T]_nu_x} & 237.54759 &237.55632&237.81754&237.51273\\ 
\hline
\end{tabular}
\caption{Characteristics of the net single premiums according to the data of Example \ref{ex3} and different interpolations of $s(x)$.}
\label{table:6}
\end{table}

\section{Proofs}\label{sec:proofs}

In this Section, we prove all of the statements formulated in Section \ref{sec:results}.

\begin{proof}[Proof of Proposition \ref{prop:main}]
The proof is straightforward
\begin{align*}
\mathbb{E}\nu^{mT_x}\mathbbm{1}_{\{l\leqslant T_x < l+n\}}&=\int_{l}^{l+n}\nu^{mt}f_x(t)\,dt
=\sum_{k=l}^{n+l-1}{_k}p_x\cdot\log \frac{1}{p_{x+k}}\int_{k}^{k+1}\nu^{mt}\cdot (p_{x+k})^{t-k}\,dt\\
&=\sum_{k=l}^{n+l-1}\frac{\nu^{mk}\cdot{_k}p_x\cdot(1-\nu^m\cdot p_{x+k})\cdot \log p_{x+k}}{\log (\nu^m \cdot p_{x+k})}.
\end{align*}
\end{proof}

\begin{proof}[Proof of Proposition \ref{prop:T_x}]
As previously,
\begin{align*}
\mathbb{E}(T_x)^m\mathbbm{1}_{\{l\leqslant T_x< n+l\}}
&=\sum_{k=l}^{n+l-1}\frac{{_k}p_x}{(p_{x+k})^k}\log\frac{1}{p_{x+k}}\int_{k}^{k+1}t^m(p_{x+k})^t\,dt\\
&=\sum_{k=l}^{n+l-1}\frac{{_k}p_x\cdot\Gamma_{m,\,k}}{(p_{x+k})^k(\log 1/p_{x+k})^m},
\end{align*}
where
\begin{align*}
\int_{k}^{k+1} t^m (p_{x+k})^t\,dt=\frac{\Gamma\left(1+m,\,k\log1/p_{x+k}\right)-\Gamma\left(1+m,\,(k+1)\log1/p_{x+k}\right)}{\left(\log 1/p_{x+k}\right)^{1+m}},\,k\in\mathbb{N}_0,
\end{align*}
and $\Gamma(a,\,x)$ is the upper incomplete gamma function \eqref{gamma}.
If $m=1$, then
\begin{align}\label{m=1}
\int_{k}^{k+1}t\cdot(p_{x+k})^t\,dt=\frac{(p_{x+k})^t(-1+t\log p_{x+k})}{(\log p_{x+k})^2}\bigg|_k^{k+1}.
\end{align}
If $m=2$, then
\begin{align}\label{m=2}
\int_{k}^{k+1}t^2\cdot(p_{x+k})^t\,dt=\frac{(p_{x+k})^t\left(2-2t\log p_{x+k}+t^2(\log p_{x+k})^2\right)}{(\log p_{x+k})^3}\bigg|_k^{k+1}.
\end{align}
\end{proof}

\begin{proof}[Proof of Proposition \ref{increasing}]
Arguing the same as before, we get
\begin{align*}
\mathbb{E}\,(T_x\nu^{T_x})^m\mathbbm{1}_{\{l\leqslant T_x< n+l\}}=
\sum_{k=l}^{n+l-1}\frac{{_k}p_x}{(p_{x+k})^k}\log\frac{1}{p_{x+k}}
\int_{k}^{k+1}t^m\cdot \nu^{mt}\cdot (p_{x+k})^t\,dt,
\end{align*}
where
\begin{align*}
&\left(\log\frac{1}{\nu^m\cdot p_{x+k}}\right)^{m+1}\int_{k}^{k+1}t^m\cdot (\nu^m p_{x+k})^t\,dt\\
&=\Gamma\left(m+1,\,k\log\frac{1}{\nu^m\cdot p_{x+k}}\right)-\Gamma\left(m+1,\,(k+1)\log\frac{1}{\nu^m \cdot p_{x+k}}\right)=\tilde{\Gamma}_{m,\,k}.
\end{align*}
\end{proof}

\begin{proof}[Proof of Proposition \ref{prop:[T]_nu}]
We have that
\begin{align*}
&\mathbb{E}\,([T_x+1]\nu^{T_x})^m\mathbbm{1}_{\{l\leqslant T_x< n+l\}}
=\sum_{k=l}^{l+n-1}\frac{(k+1)^m{_k}p_x}{(p_{x+k})^k}\log\frac{1}{p_{x+k}}\int_{k}^{k+1}\nu^{mt}(p_{x+k})^t\,dt\\
&=\sum_{k=l}^{l+n-1}\nu^{mk}\cdot(k+1)^m\cdot{_k}p_x\cdot\log\frac{1}{p_{x+k}}\cdot\frac{p_{x+k}\cdot\nu^m-1}{\log(\nu^m\cdot p_{x+k})}
\end{align*}
due to
\begin{align}\label{the_same}
\int_{k}^{k+1}\nu^{mt}(p_{x+k})^t\,dt=\frac{\nu^{mk}(p_{x+k})^k(-1+p_{x+k}\cdot\nu^m)}{\log(\nu^m p_{x+k})}.
\end{align}
\end{proof}

\begin{proof}[Proof of Lemma \ref{lem:no_greater}]
We first prove the second inequalities of \eqref{exp_ineq_1} and \eqref{exp_ineq_2}, i.e., we compare (C) to (B). Let $k\in\mathbb{N}_0$. For any $l_x>0$, $x\in\mathbb{N}_0$ and $k$ such that $q_{x+k}<1$, we have
\begin{align*}
{\mathfrak I}&:=\int_{k}^{k+1}\left({_k}p_x\cdot\log\frac{1}{p_{x+k}}\cdot(p_{x+k})^{t-k}-\frac{_{k+1}p_x\cdot q_{x+k}}{(1-(k+1-t)q_{x+k})^2}\right)g(t)\,dt\\
&=\int_{k}^{k+1}g(t)\,d\left(-{_k}p_x\cdot(p_{x+k})^{t-k}+\frac{_{k+1}p_x}{1-(k+1-t)q_{x+k}}\right)
=-\int_{k}^{k+1}g(t)\,d \,z(t),
\end{align*}
where
\begin{align*}
z(t)={_k}p_x\cdot(p_{x+k})^{t-k}-\frac{_{k+1}p_x}{1-(k+1-t)q_{x+k}},\,k\leqslant t\leqslant k+1.
\end{align*}
It is easy to check $z(k)=z(k+1)=0$. Then, upon the integration by parts,
\begin{align}\label{ineq:estimate}
{\mathfrak I}=-g(k+1)z(k+1)+g(k)z(k)+\int_{k}^{k+1}z(t)\,g'(t)\,dt
=\int_{k}^{k+1}z(t)\,g'(t)\,dt.
\end{align}
Thus, the sign of the integral $\mathfrak{I}$ in \eqref{ineq:estimate} is completely determined by the signs of the functions $z(t)$ and $g'(t)$. Let us show that $z(t)\geqslant0$ for all $k\leqslant t \leqslant k+1$ if $p_{x+k}>0$. The function $z(t)$ can be rewritten as
\begin{align*}
z(t)={_k}p_x\left((p_{x+k})^{t-k}-\frac{p_{x+k}}{t-k+(1-(t-k)p_{x+k})}\right),\,k\leqslant t \leqslant k+1.
\end{align*}
Under the change of variables $p_{x+k}=a$ and $t-k\mapsto t$, we see that the statement $z(t)\geqslant0$ for all $k\leqslant t \leqslant k+1$ is equivalent to
\begin{align}\label{equiv_ineq}
a^t(1-t)+a^{t-1}t\geqslant1,
\end{align}
for all $0\leqslant t \leqslant 1$, when $0<a\leqslant 1$ is fixed. Let us prove inequality \eqref{equiv_ineq}. By multiplying both sides of \eqref{equiv_ineq} by $a^{1-t}$ and changing the variable $1-t=s$ we get
\begin{align*}
f(s):=a^s\leqslant as+1-s,\,0\leqslant s\leqslant 1,
\end{align*}
where the obtained inequality is Jensen's inequality for the convex function \cite{jensen}, i.e., $f''(s)=(\log a)^2a^s\geqslant0$ for all $0\leqslant s \leqslant1$ and $f(s\cdot1+(1-s)\cdot0)\leqslant sf(1)+(1-s)f(0)$. In conclusion, the sign of the integral $\mathfrak{I}$ in \eqref{ineq:estimate} is determined only by $g'(t)$.

We now prove the first inequalities of \eqref{exp_ineq_1} and \eqref{exp_ineq_2}, i.e., we compare (C) to (UDD). Arguing the same as before, we consider the integral 
\begin{align*}
\tilde{{\mathfrak I}}&:=\int_{k}^{k+1}\left({_k}p_x\cdot\log\frac{1}{p_{x+k}}\cdot(p_{x+k})^{t-k}-\frac{d_{x+k}}{l_x}\right)g(t)\,dt\\
&=\int_{k}^{k+1}g(t)\,d\left(-{_k}p_x\cdot(p_{x+k})^{t-k}-\frac{d_{x+k}}{l_x}t\right)
=-\int_{k}^{k+1}g(t)\,d \,\tilde{z}(t),
\end{align*}
where
\begin{align*}
\tilde{z}(t)={_k}p_x\cdot(p_{x+k})^{t-k}+\frac{d_{x+k}}{l_x}t\geqslant 0,\,k\leqslant t \leqslant k+1.
\end{align*}
Upon the integration by parts,
\begin{align*}
\tilde{{\mathfrak I}}&=g(k)\tilde{z}(k)-g(k+1)\tilde{z}(k+1)+\int_{k}^{k+1}\tilde{z}(t)g'(t)dt\\
&=\left(g(k)-g(k+1)\right)\left({_k}p_x+\frac{d_{x+k}}{l_x}k\right)+\int_{k}^{k+1}\tilde{z}(t)g'(t)dt.
\end{align*}
Notice that $\tilde{z}(t)$ is non-concave. Indeed,
\begin{align*}
\tilde{z}''(t)={_k}p_x\cdot\left(\log p_{x+k}\right)^2\cdot(p_{x+k})^{t-k}\geqslant0,
\end{align*}
for all $k\leqslant t\leqslant k+1$. This, together with the fact that $\tilde{z}(t)$ is non-negative, implies
\begin{align*}
\max_{k\leqslant t \leqslant k+1}\tilde{z}(t)=\tilde{z}(k)=\tilde{z}(k+1)={_k}p_x+\frac{d_{x+k}}{l_x}k.
\end{align*}
Thus, if $g'(t)\leqslant0$, then
\begin{align*}
\tilde{{\mathfrak I}}\geqslant\left(g(k)-g(k+1)\right)\left({_k}p_x+\frac{d_{x+k}}{l_x}k\right)
+\max_{k\leqslant t \leqslant k+1}\tilde{z}(t)\int_{k}^{k+1}g'(t)dt=0.
\end{align*}
And conversely, if $g'(t)\geqslant0$, then
\begin{align*}
\tilde{{\mathfrak I}}\leqslant\left(g(k)-g(k+1)\right)\left({_k}p_x+\frac{d_{x+k}}{l_x}k\right)
+\max_{k\leqslant t \leqslant k+1}\tilde{z}(t)\int_{k}^{k+1}g'(t)dt=0.
\end{align*}
\end{proof}

\begin{proof}[Proof of Lemma \ref{lem:j_times}]
Since $x+k\in\mathbb{N}_0$, $d/j\in[0,\,1)$, and $(d+1)/j\in(0,\,1]$, we then apply the interpolation \eqref{C} and obtain
\begin{align*}
s\left(x+k+\frac{d}{j}\right)-s\left(x+k+\frac{d+1}{j}\right)
=s(x+k)\left(p_{x+k}\right)^{\frac{d}{j}}-s(x+k)\left(p_{x+k}\right)^{\frac{d+1}{j}}.
\end{align*}
The claimed formula follows by dividing both sides of the last equality by $s(x)>0$.
\end{proof}

\begin{proof}[Proof of Proposition \ref{prop:j_times}]
We have that
\begin{align*}
&\mathbb{E}\left(\nu^{\frac{[T_x\,j]+1}{j}}\right)^m\mathbbm{1}_{\{l*n_1\leqslant T_x<n+l*n_1\}}\\
&=\sum_{k=l}^{n+l-1}\sum_{d=n_1}^{j-1}\nu^{(k+(d+1)/j)m}\left(_{k+\frac{d}{j}}p_x-_{k+\frac{d+1}{j}}p_x\right)\\
&+
\nu^{(n+l)m}\sum_{d=0}^{n_1-1}\nu^{(d+1)/j\cdot m}\left(_{n+l+\frac{d}{j}}p_x-_{n+l+\frac{d+1}{j}}p_x\right),
\end{align*}
and the rest follows by Lemma \ref{lem:j_times}.
\end{proof}

\begin{proof}[Proof of Proposition \ref{prop:j_[T]_nu_x}]
We have that
\begin{align*}
&\mathbb{E}\left([j\cdot T_x+1]\cdot \nu^{T_x}\right)^m\mathbbm{1}_{\{l*n_1\leqslant T_x < n+l*n_1\}}\\
&=\sum_{k=l}^{n+l-1}\sum_{d=n_1}^{j-1}(d+j\cdot k+1)^m\int\limits_{k+d/j}^{k+(d+1)/j}\nu^{m t} f_x(t)\,dt\\
&+\sum_{d=0}^{n_1-1}(d+j\cdot(n+l)+1)^m\int\limits_{n+l+d/j}^{n+l+(d+1)/j}\nu^{mt}f_x(t)\,dt,
\end{align*}
where the two involved integrals are
\begin{align*}
&\int\limits_{k+d/j}^{k+(d+1)/j}\nu^{m t} f_x(t)\,dt
=\frac{{_k}p_x}{\left(p_{x+k}\right)^k}\cdot\log\frac{1}{p_{x+k}}\int\limits_{k+d/j}^{k+(d+1)/j}\nu^{m t}\cdot(p_{x+k})^t\cdot\,dt\\
&={_k}p_x\cdot\log\frac{1}{p_{x+k}}\cdot\frac{\left(p_{x+k}\right)^{\frac{d}{j}}\cdot\nu^{\left(k+\frac{d}{j}\right)m}\cdot\left(-1+\left(p_{x+k}\right)^{\frac{1}{j}}\cdot\nu^{\frac{m}{j}}\right)}{\log(\nu^m\cdot p_{x+k})},
\end{align*}
\begin{align*}
&\int\limits_{n+l+d/j}^{n+l+(d+1)/j}\nu^{m t} f_x(t)\,dt\\
&={_{n+l}}p_x\cdot\log\frac{1}{p_{x+n+l}}\cdot\frac{\left(p_{x+n+l}\right)^{\frac{d}{j}}\cdot\nu^{\left(n+l+\frac{d}{j}\right)m}\cdot\left(-1+\left(p_{x+n+l}\right)^{\frac{1}{j}}\cdot\nu^{\frac{m}{j}}\right)}{\log(\nu^m\cdot p_{x+n+l})}.
\end{align*}
\end{proof}

\section{Concluding remarks}
This work aims to supplement the existing theoretical materials in actuarial mathematics. A good understanding of past data provides a better understanding of the future; however, the past rarely predicts the future exactly. In life insurance, the expected present value of the insured amount is just one component in the final price of insurance. It is difficult to judge which interpolation out of (UDD), (C), and (B) fits best, or even whether some other fractional age assumptions should be used. In the works \cite{JONES2000261}, \cite{jones2002critique}, authors criticize these three classical assumptions ((UDD), (C), (B)) as the ones implying discontinuous force of mortality and density at integer ages. On the other hand, if the limit
\begin{align*}
\lim_{\Delta x\to0}\frac{s(x+\Delta x)-s(s)}{\Delta x}=s'(x)
\end{align*}
exists, then it follows
\begin{align}\label{approx_eq}
s(x+\Delta x)-s(x)\approx \Delta x \cdot s'(x)\text{ and } 1-\frac{l_{x+\Delta x}}{l_x}\approx \Delta x \cdot \mu(x)
\end{align}
when $\Delta x$ is some small fixed number. The approximate equalities \eqref{approx_eq} indicate that the mentioned discontinuity gaps shall be small, especially when $l_{x}\approx l_{x+\Delta x}$. In our opinion, the selection between simplistic interpolations and more complicated ones remains open, and it is rather a matter of practitioners. The examples in Section \ref{sec:examples} show that the survival probabilities ${_{k}}p_x$, $p_{x+k}$ at integre ages are much more important than interpolation; compare the numbers in Tables \ref{table:2}, \ref{table:3}, \ref{table:5}, \ref{table:6}, and Figures \ref{fig:Ax7_0}, \ref{fig:Ax7}, \ref{fig:Ax7_1}.

\bibliography{sn-bibliography}

\end{document}